
\documentclass[journal]{IEEEtran}
\usepackage{amsfonts}
\usepackage{amsmath}
\usepackage{amssymb}
\usepackage{xcolor}
\usepackage{graphicx}
\usepackage{cite}
\usepackage{array}
\usepackage{subfigure}
\usepackage{booktabs}
\usepackage{enumerate}
\usepackage{color}
\usepackage{algpseudocode}
\usepackage{epstopdf}
\usepackage{ntheorem}
\usepackage{pifont}
\usepackage{graphicx}
\usepackage{setspace}
\setcounter{MaxMatrixCols}{10}

\ifCLASSINFOpdf
\else
\fi
\hyphenation{op-tical net-works semi-conduc-tor}
\newtheorem{theorem}{Theorem}

\newtheorem{assumption}{Assumption}

\newtheorem{lemma}{Lemma}

\newtheorem{problem}{Problem}
\newtheorem{proposition}{Proposition}

\newenvironment{proof}[1][Proof]{\noindent \textbf{#1.} }{\  \rule{0.5em}{0.5em}}
\ifCLASSINFOpdf \else \fi
\hyphenation{IEEE TRANSACTIONS ON CYBERNETICS}
\allowdisplaybreaks[4]

\begin{document}
\begin{spacing}{1.0}
\title{Global prescribed-time control of a class of uncertain nonholonomic systems by smooth
time-varying feedback}
\author{Kang-Kang Zhang, Bin Zhou, Chenchen Fan, James Lam, ~\IEEEmembership{Fellow,~IEEE}
\thanks{%
This work was supported by the National Science Found for
Distinguished Young Scholars (62125303), the Science Center Program of
National Natural Science Foundation of China (62188101), the
Fundamental Research Funds for the Central Universities
(HIT.BRET.2021008), and HKU CRCG (2302101740). \textit{(Corresponding authors: Bin Zhou)}}
\thanks{Kang-Kang Zhang is with the Department of Mechanical Engineering, University of Hong Kong,
Hong Kong, China, and the Department of Computer Science, KU Leuven, B-3001 Heverlee, Belgium; James Lam is with the Department of Mechanical Engineering, University of Hong Kong,
Hong Kong, China;
Bin Zhou is with the Center for Control Theory and Guidance Technology, Harbin Institute of Technology, Harbin, 150001, China; Chenchen Fan is with the Department of Rehabilitation Sciences, The Hong Kong Polytechnic University, Hong Kong, China  (email: kang-kang.zhang@kuleuven.be, binzhou@hit.edu.cn, chenchen.fan@polyu.edu.hk, james.lam@hku.hk).}
}
\maketitle

\begin{abstract}
This paper investigates the prescribed-time smooth control problem for a class of
uncertain nonholonomic systems.  With a novel smooth time-varying state transformation, the uncertain chained nonholonomic system is reformulated as an uncertain linear time-varying system.
By fully utilizing the properties of a class of parametric Lyapunov equations and constructing time-varying Lyapunov-like functions,
smooth time-varying high-gain state and output feedback controllers are designed. The states and controllers are proven to converge to zero at any prescribed time. The proposed smooth time-varying method combines the advantage of a
time-varying high-gain function, which enhances control performance, and a
smooth time-varying function that can drive the states to zero at the prescribed time. The
effectiveness of the proposed methods is verified by a numerical example.
\end{abstract}

\markboth{}{Shell
\MakeLowercase{\textit{et al.}}: Bare Demo of IEEEtran.cls for IEEE Journals}

\begin{IEEEkeywords}
Nonholonomic systems;
Uncertain nonholonomic systems;
Prescribed-time control;
Smooth time-varying high-gain feedback.
\end{IEEEkeywords}
\IEEEpeerreviewmaketitle

\section{Introduction}

Since nonholonomic systems have a wide range of applications, such as wheeled
mobile robots and surface vessels, the control problem of nonholonomic systems
has received widespread attention
\cite{Du15auto,Ge03auto,Hongtac05,Jiang00auto,Murry93tac}. However, nonholonomic systems
do not satisfy the necessary condition for the existence of a smooth
time-invariant feedback for a nonlinear system, called Brockett's necessary
conditions \cite{Brockett83}. Therefore researchers can only turn their
attention to discontinuous time-invariant control/hybrid control and smooth
time-varying control. For discontinuous time-invariant control, a general strategy for designing discontinuous control laws that can achieve asymptotic convergence for a class of nonholonomic control systems was proposed in \cite{Astolfi96scl}. For smooth
time-varying control, a control law which globally asymptotically stabilizes
nonholonomic system in chained/power form was presented in \cite{Teel95ijc}. In order to improve the convergence rate,
by using a time-varying state transformation, a smooth time-varying feedback
control law that can achieve exponential stability was proposed in
\cite{Tian00ieee,Tian02auto}. However, the above control laws can only ensure
that the states converge to zero asymptotically (exponentially), which
sometimes cannot meet the requirement in practical engineering.

\vspace{-0.1mm}
Compared with traditional asymptotic stability (or exponential stability),
finite-/fixed-time control has higher convergence speed, faster
convergence accuracy and stronger robustness to external disturbances
\cite{BBsiam00,Hongtac05,Sun19auto}. However, for both finite-time control and
fixed-time control, the actual convergence time (rather than the upper bound)
depends on the initial condition \cite{zhou21tac}. Recently, a so-called prescribed-time control method, whose convergence time does
not depend on the initial state, have received renewed attention
\cite{Huatac19,Han18tac,Song17auto,Zhang22scl,Zhou20autoa}. By using
time-varying high-gain feedback, the prescribed-time control problem of
normal-form nonlinear systems was solved in \cite{Song17auto}. Such time-varying high-gain methods have been utilized to address various
problems, such as prescribed-time stabilization \cite{Kri20auto,Li21tac},
prescribed-time observer \cite{esp22auto,Holl19tac}, adaptive prescribed-time
control \cite{Huatac19,Ye23scl}, prescribed-time output feedback
\cite{Holl19auto,Kri20ejc,Li23tac}, and prescribed-time tracking
control \cite{Ding23auto,Ye22tac}. With the aid of properties of a class of
parametric Lyapunov equations (PLEs) and the scalarization technique
\cite{zhou21tac}, the finite-time and prescribed-time control problems for
linear and a class of nonlinear systems were addressed in
\cite{zhang2024auto,Zhou20autoa,zhou21tac} based on time-varying
high-gain feedback. Very recently, a new approach called time-space
deformation methods has been proposed in \cite{Orlov22auto} to address
numerical singular problems that may arise in time-varying high-gain feedback.
To the best of the authors' knowledge, there are no smooth time-varying
feedback results for the prescribed-time control of nonholonomic systems.

In this paper, the smooth prescribed-time control problem for a class of
uncertain nonholonomic systems will be investigated. Firstly, with a novel smooth time-varying state
transformation, uncertain chained nonholonomic systems are reformulated as
uncertain linear time-varying systems. Secondly, by fully utilizing the
properties of a class of parametric Lyapunov equations and constructing
time-varying Lyapunov-like functions, smooth time-varying high-gain feedback
controllers are designed. It is proved that the states and controllers
converge to zero at any prescribed time. Both state feedback and
observer-based output feedback are considered.

The contributions of this paper are twofold. On the one hand, different from
the traditional discontinuous control methods
\cite{Astolfi96scl,Ge03auto,Hongtac05,Hou21ast,LQtac02,Han18tac,Zhang22scl,Zhang20tac,zhou23auto}%
, which may cause singularity or chattering problems, the proposed methods in
this paper are smooth. On the other hand, different from
\cite{Murry93tac,Teel95ijc,Tian00ieee,Tian02auto,zhou24gnc}, where only the state
feedback controllers were constructed to ensure that the state converges to zero
exponentially, the proposed controllers are not only based on state feedback
but also on output feedback, and can guarantee that the state converges to zero
at any prescribed time.

\textbf{Notation:} For a (symmetric) matrix $M$, let $M^{\mathrm{T}}$,
$\left \Vert M\right \Vert $, $\lambda_{\max}(M)$, and $\lambda_{\min}(M)$
denote its transpose, 2-norm, maximal eigenvalue, and minimal eigenvalue,
respectively. In addition, $\mathrm{diag} \{A_{1},\ldots, A_{n}\}$ denotes a
diagonal matrix whose $i$th diagonal element is $A_{i}$ .

\section{\label{sec1}Problem Introduction and Preliminaries}

\subsection{Problem Introduction}

Consider the following uncertain nonholonomic system%
\begin{equation}
\left \{
\begin{array}
[c]{rl}%
\dot{x}_{0} & =u_{0},\\
\dot{x}_{1} & =u_{0}x_{2}+\phi_{1}\left(  t,u,x\right)  ,\\
& \vdots \\
\dot{x}_{n-1} & =u_{0}x_{n}+\phi_{n-1}\left(  t,u,x\right)  ,\\
\dot{x}_{n} & =u+\phi_{n}\left(  t,u,x\right)  ,\\
y & =[x_{0},x_{1}]^{\mathrm{T}},
\end{array}
\right.  \label{sys}%
\end{equation}
where $[x_{0},x^{\mathrm{T}}]^{\mathrm{T}}=[x_{0},x_{1},\ldots,x_{n}%
]^{\mathrm{T}}\in \mathbf{R}^{n+1}$ is the state vector, $u_{0}\in \mathbf{R}$
and $u\in \mathbf{R}$ are two control inputs, $y\in \mathbf{R}^{2}$ is the
output vector, and $\phi_{i}(t,u,x), i=1,2,\ldots,n$ are some unknown continuous
functions satisfying the following assumption.

\begin{assumption}
\label{ass1}The nonlinear functions $\phi_{i}(t,u,x), i=1,2,\ldots,n,$ satisfy
\[
\left \vert \phi_{i}\left(  t,u,x\right)  \right \vert \leq c_{i1}\left \vert
x_{1}\right \vert +c_{i2}\left \vert x_{2}\right \vert +\cdots+c_{ii}\left \vert
x_{i}\right \vert,
\]
where $c_{ij},$ $j=1,2,\ldots,i$ are some known constants.
\end{assumption}

In this paper, regarding system (\ref{sys}), we are interested in the
following problem.

\begin{problem}
\label{prob2} Suppose that Assumption \ref{ass1} holds true. Let $T>0$ be a
prescribed time. Design a smooth time-varying state feedback controllers
$u_{0}(t)$ and $u(t)$ such that, for any initial condition, the states and
controls converge to zero at the prescribed time $T$.
\end{problem}

Since not all states are actually measurable, it is necessary to study
observer-based output feedback control strategies.

\begin{problem}
\label{prob3}Suppose that Assumption \ref{ass1} holds true. Let $T>0$ be a
prescribed time. Design a smooth time-varying output feedback
\[
\left \{
\begin{array}
[c]{rl}%
\dot{\xi}\left(  t\right)  & =F\xi \left(  t\right)  +Gu\left(  t\right)
+L\left(  t\right)  y\left(  t\right)  ,\\
u\left(  t\right)  & =H\left(  t\right)  \xi \left(  t\right)  ,
\end{array}
\right.
\]
such that, for any initial condition, the states and controls converge to zero
at the prescribed time $T$.
\end{problem}

\subsection{Preliminaries}

\subsubsection{Some properties of $\phi(t,u,x)$}

Consider the time-varying state transformation%
\begin{equation}
z\left(  t\right)  =L_{n}\left(  \frac{1}{T-t}\right)  x\left(  t\right)
,\quad t\in \lbrack0,T), \label{z_i_x_i}%
\end{equation}
where
\begin{equation}
L_{n}\left(  \gamma \right)  =\mathrm{diag}\left \{  \gamma^{n-1},\gamma
^{n-2},\ldots,1\right \}  . \label{eqdn}%
\end{equation}
For later use, denote, for $i=1,2,\dots,n$,%
\begin{align}
\varphi_{i}(t,u,x)  &  =\left(  T-t\right)  ^{i-n}\phi_{i}(t,u,x),\quad
\label{phis}\\
\psi_{i}(t,u,x)  &  =\varphi_{i}(t,u,x)+\frac{n-i}{T-t}z_{i}. \label{psi}%
\end{align}
Therefore, by using Assumption \ref{ass1} and (\ref{z_i_x_i}), it can be
obtained that, for $i=1,2,\dots,n$,%
\begin{align*}
\left \vert \varphi_{i}(t,u,x)\right \vert =  &  \left(  T-t\right)
^{i-n}\left \vert \phi_{i}(t,u,x)\right \vert \\
\leq &  \left(  T-t\right)  ^{i-n}\sum_{j=1}^{i}c_{ij}\left \vert
x_{j}\right \vert \\
=  &  \left(  T-t\right)  ^{i-n}\sum_{j=1}^{i}c_{ij}\left \vert \left(
T-t\right)  ^{n-j}z_{j}\right \vert \\
=  &  \sum_{j=1}^{i}c_{ij}\left \vert \left(  T-t\right)  ^{i-j}z_{j}%
\right \vert ,
\end{align*}
and further%
\begin{align}
|\psi_{i}(t,u,z)|\leq &  \sum_{j=1}^{i}c_{ij}|(T-t)^{i-j}z_{j}|+\frac
{n-i}{T-t}|z_{i}|\nonumber \\
=  &  \frac{1}{T-t}\sum_{j=1}^{i}c_{ij}|(T-t)^{i-j+1}z_{j}|+\frac{n-i}%
{T-t}|z_{i}|\nonumber \\
\leq &  \frac{1}{T-t}\sum_{j=1}^{i}g_{ij}\left \vert z_{j}\right \vert ,
\label{phi}%
\end{align}
where $g_{ij}=c_{ij}T^{i+1-j}$, $j=1,2,\ldots,i-1$, $i=2,3,\dots,n$,
and$\quad g_{ii}=c_{ii}T+n-i$, $i=1,2,\dots,n$.

With above preparations, we can state the following result by referring Lemma
1 in \cite{zhou21tac}.

\begin{lemma}
\label{lem1}Suppose that Assumption \ref{ass1} holds true. Then, for any
$z\in \mathbf{R}^{n}$ and any $t\in \lbrack0,T)$, there holds
\[
\mathit{\Phi}^{2}\left(  t,u,z\right)  \triangleq \left(  L_{n}\psi \right)
^{\mathrm{T}}\left(  L_{n}\psi \right)  \leq d^{2}\gamma^{2}\left(
L_{n}z\right)  ^{\mathrm{T}}\left(  L_{n}z\right)  ,
\]
where $\gamma=\frac{1}{T-t}$, $\gamma_{0}=\frac{1}{T}$, $L_{n}=L_{n}(\gamma)$,
and
\begin{equation}
d^{2}=\max \left \{
{\displaystyle \sum \limits_{i=1}^{n}}
\frac{g_{i1}^{2}i}{\gamma_{0}^{2\left(  i-1\right)  }},\cdots,%
{\displaystyle \sum \limits_{i=n}^{n}}
\frac{g_{in}^{2}i}{\gamma_{0}^{2\left(  i-n\right)  }}\right \}  . \label{d}%
\end{equation}

\end{lemma}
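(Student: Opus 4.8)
The plan is to expand the quadratic form coordinate-wise, apply the scalar bound (\ref{phi}) together with the Cauchy--Schwarz inequality, and then reorganize the resulting double sum so that the definition (\ref{d}) of $d^2$ emerges naturally. First I would note that, since $L_n=L_n(\gamma)=\mathrm{diag}\{\gamma^{n-1},\dots,\gamma,1\}$, the left-hand side splits as $(L_n\psi)^{\mathrm{T}}(L_n\psi)=\sum_{i=1}^{n}\gamma^{2(n-i)}\psi_i^2$, while the right-hand weight reads $(L_nz)^{\mathrm{T}}(L_nz)=\sum_{j=1}^{n}\gamma^{2(n-j)}z_j^2$. The claim therefore reduces to comparing the coefficient of each $z_j^2$ on both sides.

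Second, I would bound each $\psi_i^2$ using (\ref{phi}). Squaring gives $\psi_i^2\le\gamma^2\bigl(\sum_{j=1}^{i}g_{ij}|z_j|\bigr)^2$, and applying Cauchy--Schwarz against the all-ones vector of length $i$ yields $\psi_i^2\le\gamma^2\,i\sum_{j=1}^{i}g_{ij}^2z_j^2$. Substituting this into the coordinate expansion produces the double sum $\gamma^2\sum_{i=1}^{n}\sum_{j=1}^{i}i\,g_{ij}^2\,\gamma^{2(n-i)}z_j^2$.

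Third, the key manipulation is to interchange the order of summation. The index set $\{(i,j):1\le j\le i\le n\}$ is rewritten as $\{(i,j):j\le i\le n,\ 1\le j\le n\}$, so the bound becomes $\gamma^2\sum_{j=1}^{n}z_j^2\sum_{i=j}^{n}i\,g_{ij}^2\,\gamma^{2(n-i)}$. Matching this against $d^2\gamma^2\sum_{j=1}^{n}\gamma^{2(n-j)}z_j^2$ and dividing the $j$th coefficient by $\gamma^{2(n-j)}$, it suffices to prove, for every $j$, that $\sum_{i=j}^{n}i\,g_{ij}^2\,\gamma^{2(j-i)}\le d^2$.

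Finally, I would exploit $t\in[0,T)$, which gives $\gamma=1/(T-t)\ge 1/T=\gamma_0>0$. Since each exponent $2(j-i)$ is nonpositive for $i\ge j$, monotonicity of $s\mapsto s^{2(j-i)}$ yields $\gamma^{2(j-i)}\le\gamma_0^{2(j-i)}$, so $\sum_{i=j}^{n}i\,g_{ij}^2\,\gamma^{2(j-i)}\le\sum_{i=j}^{n}i\,g_{ij}^2/\gamma_0^{2(i-j)}$, which is precisely the $j$th entry of the set whose maximum defines $d^2$ in (\ref{d}); the inequality follows. The only mildly delicate points are getting the Cauchy--Schwarz factor $i$ right and keeping track of the sign of the exponent when replacing $\gamma$ by $\gamma_0$, so I expect the bookkeeping in the summation swap to be the main thing to manage carefully rather than any genuine conceptual obstacle.
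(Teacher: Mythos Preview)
Your proposal is correct and follows essentially the same approach as the paper's proof: both use the bound (\ref{phi}), apply Cauchy--Schwarz to get the factor $i$, and then invoke the definition (\ref{d}) of $d^2$. Your version simply makes explicit the summation swap and the monotonicity step $\gamma^{2(j-i)}\le\gamma_0^{2(j-i)}$ that the paper compresses into the phrase ``in view of (\ref{eqdn}) and (\ref{d}).''
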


\begin{proof}
It follows from (\ref{phi}) that%
\begin{align*}
\psi_{i}^{2}\left(  t,u,z\right)   &  \leq \frac{1}{\left(  T-t\right)  ^{2}%
}\left(
{\displaystyle \sum \limits_{j=1}^{i}}
g_{ij}\left \vert z_{j}\right \vert \right)  ^{2}\\
&  \leq \frac{i}{\left(  T-t\right)  ^{2}}%
{\displaystyle \sum \limits_{j=1}^{i}}
g_{ij}^{2}\left \vert z_{j}\right \vert ^{2}.
\end{align*}
Then in view of (\ref{eqdn}) and (\ref{d}), we have%
\begin{align*}
&  \mathit{\Phi}^{2}\left(  t,u,z\right)  =\left(  L_{n}\psi \right)
^{\mathrm{T}}\left(  L_{n}\psi \right) \\
&  =%
{\displaystyle \sum \limits_{i=1}^{n}}
\gamma^{2\left(  n-i\right)  }\psi_{i}^{2}\left(  t,u,z\right) \\
&  \leq \frac{1}{\left(  T-t\right)  ^{2}}%
{\displaystyle \sum \limits_{i=1}^{n}}
\gamma^{2\left(  n-i\right)  }i%
{\displaystyle \sum \limits_{j=1}^{i}}
g_{ij}^{2}\left \vert z_{j}\right \vert ^{2}\\
&  \leq d^{2}\gamma^{2}\left(  \left \vert \gamma^{n-1}z_{1}\right \vert
^{2}+\left \vert \gamma^{n-2}z_{2}\right \vert ^{2}+\cdots+\left \vert \gamma
^{0}z_{n}\right \vert ^{2}\right)  .
\end{align*}
The proof is finished.
\end{proof}

\subsubsection{Some Properties of the PLE}

We now introduce some properties of the parametric Lyapunov equations (PLEs)
that will be used in this paper. Denote%
\begin{equation}
A=\left[
\begin{array}
[c]{c|cccc}%
0 & 1 &  &  & \\
\vdots &  & \ddots &  & \\
0 &  &  & 1 & \\ \hline
0 & 0 & \cdots & 0 &
\end{array}
\right]  ,\quad b=\left[
\begin{array}
[c]{c}%
0\\
\vdots \\
0\\\hline
1
\end{array}
\right]  ,\quad c=\left[
\begin{array}
[c]{c}%
1\\\hline
0\\
\vdots \\
0
\end{array}
\right]  ^{\mathrm{T}}. \label{eqab}%
\end{equation}
Consider the following PLE \cite{zhou21tac}
\begin{equation}
A^{\mathrm{T}}P+PA-Pbb^{\mathrm{T}}P=-\gamma P, \label{ple1}%
\end{equation}
and its dual form%
\begin{equation}
AQ+QA^{\mathrm{T}}-Qc^{\mathrm{T}}cQ=-\gamma Q, \label{ple2}%
\end{equation}
where $\gamma>0$ is a parameter to be designed.

\begin{lemma}
\label{lm1}\cite{zhou14book,zhou21tac} Let $(A,b)\in(\mathbf{R}^{n\times n}%
,\mathbf{R}^{n\times1})$ be given by (\ref{eqab}) and $\gamma>0$.

\begin{enumerate}
\item The PLE (\ref{ple1}) has a unique positive definite solution
\begin{equation}
P(\gamma)=\gamma L_{n}P_{n}L_{n}, \label{P}%
\end{equation}
where $P_{n}=P(1)$ and $L_{n}=L_{n}(\gamma)$.

\item The solution $P(\gamma)$ satisfies
\begin{equation}
\frac{\mathrm{d}P(\gamma)}{\mathrm{d}\gamma}>0,\quad b^{\mathrm{T}}%
P(\gamma)b=n\gamma. \label{bb}%
\end{equation}

\item There holds%
\begin{equation}
\frac{P(\gamma)}{n\gamma}\leq \frac{\mathrm{d}P(\gamma)}{\mathrm{d}\gamma}%
\leq \frac{\delta_{\mathrm{c}}P(\gamma)}{n\gamma},\quad \forall \gamma>0,
\label{dP_bounded}%
\end{equation}
where $\delta_{\mathrm{c}}=n(1+\lambda_{\max}(E_{n}+P_{n}E_{n}P_{n}^{-1}))$
with $E_{n}=\mathrm{diag}\{n-1,n-2,\ldots,1,0\}$.

\item There holds%
\begin{align}
A^{\mathrm{T}}PA\leq3n^{2}\gamma^{2}P. \label{hh}%
\end{align}

\end{enumerate}
\end{lemma}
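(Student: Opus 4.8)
The plan is to reduce every assertion to the base case $\gamma=1$ by exploiting the homogeneity of the triple $(A,b,L_n)$. The key algebraic facts are the diagonal scaling identities $L_n(\gamma)AL_n^{-1}(\gamma)=\gamma A$ (equivalently $L_nA=\gamma AL_n$ and $A^{\mathrm{T}}L_n=\gamma L_nA^{\mathrm{T}}$) and $L_n(\gamma)b=b$, both of which follow immediately from the super-diagonal structure of $A$ in (\ref{eqab}) and from $b$ being the last coordinate vector. For Item~1, I would substitute the candidate $P(\gamma)=\gamma L_nP_nL_n$ from (\ref{P}) directly into (\ref{ple1}); using the scaling identities to pull every $L_n$ to the outside, the left-hand side collapses to $\gamma L_n(A^{\mathrm{T}}P_n+P_nA-P_nbb^{\mathrm{T}}P_n)L_n$, which by the defining equation of $P_n=P(1)$ equals $-\gamma L_nP_nL_n=-\gamma P(\gamma)$, as required. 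Uniqueness and positive definiteness then follow from standard algebraic Riccati theory: rewriting (\ref{ple1}) as $(A+\tfrac{\gamma}{2}I)^{\mathrm{T}}P+P(A+\tfrac{\gamma}{2}I)-Pbb^{\mathrm{T}}P=0$, the pair $(A,b)$ is controllable and $A+\tfrac{\gamma}{2}I$ is anti-stable (all eigenvalues at $\tfrac{\gamma}{2}>0$), so a unique positive definite stabilizing solution exists.

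For Item~2, the monotonicity $\mathrm{d}P/\mathrm{d}\gamma>0$ I would obtain by differentiating (\ref{ple1}) in $\gamma$. Setting $A_{\mathrm{c}}=A-bb^{\mathrm{T}}P$ and $\dot P=\mathrm{d}P/\mathrm{d}\gamma$, the derivative rearranges into the Lyapunov equation $(A_{\mathrm{c}}+\tfrac{\gamma}{2}I)^{\mathrm{T}}\dot P+\dot P(A_{\mathrm{c}}+\tfrac{\gamma}{2}I)=-P$; since the stabilizing solution makes $A_{\mathrm{c}}+\tfrac{\gamma}{2}I$ Hurwitz and the right-hand side $-P$ is negative definite, the unique solution $\dot P$ is positive definite. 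The identity $b^{\mathrm{T}}P(\gamma)b=n\gamma$ in (\ref{bb}) I would reduce, via $L_nb=b$, to the scalar claim $b^{\mathrm{T}}P_nb=n$ for the base matrix, which is established from the explicit form of $P_n$ (or by induction on $n$ using the block recursion of the Riccati equation).

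For Item~3, the starting point is the closed-form derivative. Differentiating $P(\gamma)=\gamma L_nP_nL_n$ and using $\mathrm{d}L_n/\mathrm{d}\gamma=\tfrac{1}{\gamma}E_nL_n$ (which is exactly why $E_n=\mathrm{diag}\{n-1,\ldots,1,0\}$ enters) gives the compact formula $\dot P=\tfrac{1}{\gamma}(P+E_nP+PE_n)$. The upper bound in (\ref{dP_bounded}) is then essentially the definition of $\delta_{\mathrm{c}}$: since $(E_nP+PE_n)P^{-1}=E_n+PE_nP^{-1}$ is similar (via $L_n$, as $E_n$ commutes with $L_n$) to $E_n+P_nE_nP_n^{-1}$, one has $E_nP+PE_n\leq\lambda_{\max}(E_n+P_nE_nP_n^{-1})P$, whence $\dot P\leq\tfrac{1}{\gamma}(1+\lambda_{\max}(E_n+P_nE_nP_n^{-1}))P=\tfrac{\delta_{\mathrm{c}}}{n\gamma}P$. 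The lower bound is the genuinely delicate part: it is equivalent to $E_nP+PE_n\geq-\tfrac{n-1}{n}P$, i.e. to the eigenvalue estimate $\lambda_{\min}(E_n+P_nE_nP_n^{-1})\geq-\tfrac{n-1}{n}$, and because the anticommutator of two positive semidefinite matrices need not be positive semidefinite, no generic bound suffices — one must invoke the specific structure of $P_n$. I expect this step to be the main obstacle.

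Finally, Item~4 reduces by the same scaling. Using $A^{\mathrm{T}}L_n=\gamma L_nA^{\mathrm{T}}$ and $L_nA=\gamma AL_n$ one finds $A^{\mathrm{T}}P(\gamma)A=\gamma^{3}L_n(A^{\mathrm{T}}P_nA)L_n$, while $\gamma^{2}P(\gamma)=\gamma^{3}L_nP_nL_n$, so (\ref{hh}) is equivalent to the $\gamma$-independent inequality $A^{\mathrm{T}}P_nA\leq 3n^{2}P_n$. This last is a fixed finite-dimensional matrix inequality that can be verified by bounding the norm-one shift $A$ against the conditioning of the constant matrix $P_n$, the constant $3n^{2}$ being deliberately loose. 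In short, the scaling machinery disposes of Item~1, the monotonicity and identity of Item~2, the upper bound of Item~3, and Item~4 cleanly, so the whole weight of the proof rests on the two base-case, structure-dependent facts $b^{\mathrm{T}}P_nb=n$ and $\lambda_{\min}(E_n+P_nE_nP_n^{-1})\geq-\tfrac{n-1}{n}$.
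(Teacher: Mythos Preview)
The paper does not supply its own proof of this lemma: it is quoted verbatim from \cite{zhou14book,zhou21tac} and used as a black box, so there is no in-paper argument to compare against. That said, your reconstruction is faithful to how those references actually establish the result: the homogeneity reduction via $L_n(\gamma)AL_n^{-1}(\gamma)=\gamma A$ and $L_n(\gamma)b=b$ is precisely the mechanism behind (\ref{P}), and the derivative formula $\dot P=\tfrac{1}{\gamma}(P+E_nP+PE_n)$ is exactly what drives (\ref{dP_bounded}) in \cite{zhou21tac}. Your identification of the two residual base-case facts, $b^{\mathrm{T}}P_nb=n$ and $\lambda_{\min}(E_n+P_nE_nP_n^{-1})\geq-\tfrac{n-1}{n}$, as the structure-dependent steps is accurate; in the cited works the first follows from the trace identity $\mathrm{tr}(Pbb^{\mathrm{T}}P\,P^{-1})=\mathrm{tr}(bb^{\mathrm{T}}P)=b^{\mathrm{T}}Pb$ combined with the closed-loop spectrum of $A-bb^{\mathrm{T}}P$, while the lower bound in Item~3 is obtained not through your eigenvalue route but via the Lyapunov equation for $\dot P$ that you already wrote down for Item~2, together with the companion Lyapunov equation satisfied by $P/\gamma$ itself. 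So your plan is correct, with the only refinement being that the ``delicate'' lower bound need not be attacked spectrally: both $\dot P$ and $\tfrac{1}{n\gamma}P$ satisfy Lyapunov equations with the same Hurwitz coefficient matrix $A_{\mathrm c}+\tfrac{\gamma}{2}I$, and comparing the right-hand sides yields the ordering directly.
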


\begin{lemma}
\label{lm3}\cite{zhou21tac} Let $( A,c) \in( \mathbf{R}^{n\times n}%
,\mathbf{R}^{1\times n}) $ be given by (\ref{eqab}) and $\gamma>0$.

\begin{enumerate}
\item The unique positive definite solution $Q(\gamma)$ is given by%
\begin{equation}
Q(\gamma)=\gamma^{2n-1}L_{n}^{-1}Q_{n}L_{n}^{-1}, \label{Q}%
\end{equation}
where $Q_{n}=Q(1)$ and $L_{n}=L_{n}(\gamma)$.

\item There holds%
\begin{equation}
\frac{Q( \gamma) }{n\gamma}\leq \frac{\mathrm{d}Q( \gamma) }{\mathrm{d}\gamma
}\leq \frac{\delta_{\mathrm{o}}Q( \gamma) }{n\gamma},\quad \forall \gamma>0,
\label{eqdq}%
\end{equation}
with $\delta_{\mathrm{o}}$ is a constant.

\item The matrices $P_{n},Q_{n},P_{n}^{-1},$ and $Q_{n}^{-1}$ are similar to
each other and thus%
\begin{align}
\lambda_{\max}(P_{n})  &  =\lambda_{\max}(Q_{n})=\lambda_{\min}^{-1}%
(P_{n})=\lambda_{\max}(Q_{n}^{-1})\nonumber \\
&  =\lambda_{\min}^{-1}(Q_{n})=\lambda_{\min}^{-1}(Q_{n}^{-1})\triangleq
\mathit{\Lambda}. \label{kama}%
\end{align}

\item There holds
\begin{equation}
cQ(\gamma)c^{\mathrm{T}}=n\gamma. \label{CQC}%
\end{equation}

\end{enumerate}
\end{lemma}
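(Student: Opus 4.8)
The plan is to derive all four items from one structural fact: the observability pair $(A,c)$ is the \emph{reversal} of the controllability pair $(A,b)$, so the dual PLE (\ref{ple2}) is nothing but the primal PLE (\ref{ple1}) read backwards. Let $J$ denote the anti-diagonal permutation, $J=J^{\mathrm{T}}=J^{-1}$. Directly from (\ref{eqab}) one verifies $JAJ=A^{\mathrm{T}}$, $Jc^{\mathrm{T}}=b$, $cJ=b^{\mathrm{T}}$, and, because $L_{n}=\mathrm{diag}\{\gamma^{n-1},\dots,1\}$, also $JL_{n}J=\gamma^{n-1}L_{n}^{-1}$. These identities are essentially the only computations the argument needs.

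First I would establish item~1. Conjugating (\ref{ple2}) by $J$ and using $JAJ=A^{\mathrm{T}}$ together with $Jc^{\mathrm{T}}cJ=bb^{\mathrm{T}}$, the matrix $\widetilde{Q}=JQJ$ is seen to satisfy precisely the primal PLE (\ref{ple1}). Since $X\mapsto JXJ$ is a bijection on symmetric matrices preserving positive definiteness, the first assertion of Lemma~\ref{lm1} transfers existence, uniqueness, and positive definiteness to (\ref{ple2}) and yields $Q(\gamma)=JP(\gamma)J$. Substituting $P(\gamma)=\gamma L_{n}P_{n}L_{n}$ from (\ref{P}) and $JL_{n}J=\gamma^{n-1}L_{n}^{-1}$ gives $Q(\gamma)=\gamma^{2n-1}L_{n}^{-1}(JP_{n}J)L_{n}^{-1}$, which is exactly (\ref{Q}) with $Q_{n}=JP_{n}J$.

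Items~2 and~4 are then immediate. For item~2, conjugating the matrix inequalities (\ref{dP_bounded}) by the orthogonal $J$ and using $JP(\gamma)J=Q(\gamma)$ and $J(\mathrm{d}P/\mathrm{d}\gamma)J=\mathrm{d}Q/\mathrm{d}\gamma$ preserves the ordering, so (\ref{eqdq}) holds and one may take $\delta_{\mathrm{o}}=\delta_{\mathrm{c}}$ (equivalently $\delta_{\mathrm{o}}=n(1+\lambda_{\max}(\widetilde{E}_{n}+Q_{n}\widetilde{E}_{n}Q_{n}^{-1}))$ with $\widetilde{E}_{n}=JE_{n}J$, which is similar to $E_{n}+P_{n}E_{n}P_{n}^{-1}$). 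For item~4, since $cL_{n}^{-1}=\gamma^{-(n-1)}c$, formula (\ref{Q}) gives $cQ(\gamma)c^{\mathrm{T}}=\gamma\,cQ_{n}c^{\mathrm{T}}=\gamma(Q_{n})_{11}=\gamma(P_{n})_{nn}=\gamma\,b^{\mathrm{T}}P_{n}b=n\gamma$, where the last equality is (\ref{bb}) evaluated at $\gamma=1$; this is (\ref{CQC}).

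It remains to prove item~3, which I expect to be the only real obstacle. From $Q_{n}=JP_{n}J$ the similarities $Q_{n}\sim P_{n}$ and $Q_{n}^{-1}\sim P_{n}^{-1}$ are immediate, so the entire content of (\ref{kama}) reduces to showing $P_{n}\sim P_{n}^{-1}$. I would obtain this from a \emph{signed} reversal $M=SJ$ with $S=\mathrm{diag}\{1,-1,1,\dots\}$, verifying $MP_{n}M^{-1}=P_{n}^{-1}$ against the $\gamma=1$ instance of (\ref{ple1}) (as established in \cite{zhou21tac,zhou14book}). Once $P_{n}\sim P_{n}^{-1}$, the eigenvalue multiset of $P_{n}$ is invariant under $\lambda\mapsto1/\lambda$, hence $\lambda_{\max}(P_{n})=\lambda_{\min}^{-1}(P_{n})$; combined with the mutual similarities, all four matrices share these extreme eigenvalues, which is exactly (\ref{kama}). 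Identifying the correct signed reversal is the single step that does not follow mechanically from the reversal duality.
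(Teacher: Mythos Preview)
The paper does not supply its own proof of Lemma~\ref{lm3}: the result is quoted verbatim from \cite{zhou21tac} (and ultimately \cite{zhou14book}) as a preliminary fact, so there is no in-paper argument to compare your proposal against. That said, your proof is correct and is essentially the argument one finds in those sources. The reversal duality $Q(\gamma)=JP(\gamma)J$ via the anti-identity $J$ is exactly the mechanism behind items~1, 2, and 4, and your computations with $JAJ=A^{\mathrm{T}}$, $Jc^{\mathrm{T}}=b$, and $JL_{n}J=\gamma^{n-1}L_{n}^{-1}$ are all sound. For item~3 you correctly identify that the only nontrivial step is $P_{n}\sim P_{n}^{-1}$, and the signed reversal $M=SJ$ with $S=\mathrm{diag}\{1,-1,\dots,(-1)^{n-1}\}$ is precisely the similarity used in \cite{zhou14book}; one checks that $MAM^{-1}=-A^{\mathrm{T}}$ and $Mb=\pm c^{\mathrm{T}}$, so conjugating (\ref{ple1}) at $\gamma=1$ by $M$ turns it into the equation characterizing $P_{n}^{-1}$. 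Your proposal is therefore a faithful, self-contained reconstruction of the cited proof.
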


\section{\label{sec3}State Feedback}

A solution to Problem \ref{prob2} can be stated as follows.

\begin{theorem}
\label{the1}Let $T>0$ be a prescribed time, $\beta=\frac{2n+\delta
_{\mathrm{c}}}{n}+2d\mathit{\Lambda}$, and $\gamma_{0}=1/T.$ Design the
smooth time-varying state feedback%
\begin{equation}
\left \{
\begin{array}
[c]{rl}%
u_{0}(  t)  & =-\frac{3}{T-t}x_{0}(  t)-\frac{\beta}{2}(T-t),\\[1.5mm]
u(  t)  & =-\beta b^{\mathrm{T}}P(  \gamma (t)
)  L_{n}\left(  \frac{1}{T-t}\right)  x(  t),\\[1.5mm]
\gamma (  t)  & =\frac{T}{T-t}\gamma_{0}.
\end{array}
\right.  \label{u_0_u}%
\end{equation}
Then, for any initial condition and all $t\in \lbrack0,T)$, the states and
controls of the closed-loop system consisting of (\ref{sys}) and
(\ref{u_0_u}) satisfy%
\begin{align}
\left \vert x_{0}(t)\right \vert  &  \leq (  T-t)  ^{2}\left(
\frac{T-t}{T^{3}}\left \vert x_{0}(0)\right \vert +\frac{\beta t}{2T}\right)
,\label{A}\\
\left \vert u_{0}(t)\right \vert  &  \leq(T-t)\left(  3(  T-t)
\left(  \frac{\vert x_{0}(0) \vert }{T^{3}}+\frac{\beta}{2T}\right)
+\beta \right)  ,\label{B}\\
\left \Vert x\left(  t\right)  \right \Vert  &  \leq \upsilon_{1}\left(
T-t\right)  ^{\frac{3}{2}}\mathrm{e}^{\upsilon_{10}\left \vert x_{0}%
(0)\right \vert t}\left \Vert x(  0)  \right \Vert ,\label{C}\\
\Vert u(t) \Vert  &  \leq \upsilon_{2}(
T-t)  ^{\frac{1}{2}}\mathrm{e}^{\upsilon_{20}\vert x_{0}%
(0)\vert t}\left \Vert x\left(  0\right)  \right \Vert , \label{D}%
\end{align}
where $\upsilon_{1}$, $\upsilon_{10}$, $\upsilon_{2}$ and $\upsilon_{20}$ are
some positive constants, namely, Problem \ref{prob2} is solved.
\end{theorem}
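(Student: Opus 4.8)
The plan is to split system \eqref{sys} into the scalar $x_{0}$-channel, which is self-contained, and the $n$-dimensional chain, which I treat after the rescaling $z=L_{n}(\gamma)x$, $\gamma=\frac{1}{T-t}$, via the PLE machinery of Lemmas \ref{lem1}--\ref{lm3}.

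\textit{Step 1 (the $x_{0}$-channel).} The equation $\dot{x}_{0}=u_{0}=-\frac{3}{T-t}x_{0}-\frac{\beta}{2}(T-t)$ is a decoupled scalar linear time-varying ODE. I would solve it with the integrating factor $(T-t)^{-3}$ to get
\[
x_{0}(t)=\frac{(T-t)^{3}}{T^{3}}x_{0}(0)-\frac{\beta t}{2T}(T-t)^{2}.
\]
Bound (A) is then immediate, and substituting this into $u_{0}$ and estimating termwise gives (B). The real purpose of this step is that it makes $u_{0}(t)\gamma(t)$ an explicit bounded function; in particular I record
\[
-u_{0}\gamma^{2}=\frac{3x_{0}(0)}{T^{3}}+\Big(\frac{\beta}{2}-\frac{3\beta t}{2T}\Big)\gamma,
\]
whose first, $x_{0}(0)$-dependent, summand carries no factor $\gamma$.

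\textit{Step 2 (reformulation).} Differentiating $z_{i}=\gamma^{n-i}x_{i}$ and using \eqref{sys} with \eqref{phis}--\eqref{psi}, the chain becomes the uncertain LTV system $\dot{z}=u_{0}\gamma Az+bu+\psi$ with $(A,b)$ from \eqref{eqab}. Inserting $u=-\beta b^{\mathrm{T}}P(\gamma)z$ gives the closed loop $\dot{z}=u_{0}\gamma Az-\beta bb^{\mathrm{T}}Pz+\psi$.

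\textit{Step 3 (Lyapunov estimate; the crux).} With $V=z^{\mathrm{T}}P(\gamma)z$, $\dot{\gamma}=\gamma^{2}$, and the PLE \eqref{ple1} written as $A^{\mathrm{T}}P+PA=Pbb^{\mathrm{T}}P-\gamma P$ (so that $2z^{\mathrm{T}}PAz=(b^{\mathrm{T}}Pz)^{2}-\gamma V$), one obtains
\[
\dot{V}=(u_{0}\gamma-2\beta)(b^{\mathrm{T}}Pz)^{2}-u_{0}\gamma^{2}V+2z^{\mathrm{T}}P\psi+\gamma^{2}z^{\mathrm{T}}\tfrac{\mathrm{d}P}{\mathrm{d}\gamma}z.
\]
I keep $-u_{0}\gamma^{2}V$ exactly. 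For the rest: Lemma \ref{lem1} with the $P$-weighted Cauchy--Schwarz inequality and \eqref{bb} give $2z^{\mathrm{T}}P\psi\le 2d\mathit{\Lambda}\gamma V$; the upper bound in \eqref{dP_bounded} gives $\gamma^{2}z^{\mathrm{T}}\frac{\mathrm{d}P}{\mathrm{d}\gamma}z\le\frac{\delta_{\mathrm{c}}}{n}\gamma V$; and $(b^{\mathrm{T}}Pz)^{2}\le n\gamma V$ together with the explicit $u_{0}\gamma$ of Step 1 bounds the positive part of the indefinite first term by $\frac{3n|x_{0}(0)|}{T^{3}}V$. This yields a linear differential inequality $\dot{V}\le\theta(t)V$; integrating it and substituting $\beta=\frac{2n+\delta_{\mathrm{c}}}{n}+2d\mathit{\Lambda}$ collapses the coefficient of $\ln\frac{T}{T-t}$ to exactly $-\beta+2d\mathit{\Lambda}+\frac{\delta_{\mathrm{c}}}{n}=-2$, while the $x_{0}(0)$-terms integrate to a term linear in $t$, so that
\[
V(t)\le\frac{(T-t)^{2}}{T^{2}}\,\mathrm{e}^{\frac{3\beta}{2}}\,\mathrm{e}^{\frac{3(n+1)|x_{0}(0)|}{T^{3}}t}\,V(0).
\]

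\textit{Step 4 (back-substitution) and the main difficulty.} Writing $x_{i}=(T-t)^{2(n-i)}(L_{n}z)_{i}$ and using $V\ge\gamma\mathit{\Lambda}^{-1}\Vert L_{n}z\Vert^{2}$ gives $\Vert x\Vert\le\sqrt{n\mathit{\Lambda}}\,M\,(T-t)^{1/2}V^{1/2}$ with $M=\max\{1,T^{2(n-1)}\}$; the $V$-bound then supplies the extra factor $(T-t)$ and, with $V(0)^{1/2}\lesssim\Vert x(0)\Vert$ (from bounding $P(\gamma_{0})$ and $L_{n}(\gamma_{0})$), yields (C). For the control, $|u|=\beta|b^{\mathrm{T}}Pz|\le\beta\sqrt{n\gamma}\,V^{1/2}=\beta\sqrt{n}(T-t)^{-1/2}V^{1/2}$, and the same $V$-bound gives (D). Since $(T-t)^{3/2}\to0$, $(T-t)^{1/2}\to0$ and the exponentials stay bounded on $[0,T]$, all quantities vanish as $t\to T$. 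The main obstacle is Step 3: because $u_{0}\gamma$ is time-varying and sign-indefinite (running from $\approx-\beta/2$ at $t=0$ to $\beta$ as $t\to T$), replacing it by its maximum destroys the decay; the prescribed rate survives only because the exact coefficient $-u_{0}\gamma^{2}$ is integrated over $[0,t]$, and $\beta$ is tuned precisely so the accumulated logarithmic exponent equals $-2$. The $x_{0}(0)$-dependence of $u_{0}\gamma$ is exactly the source of the factor $\mathrm{e}^{\upsilon_{10}|x_{0}(0)|t}$.
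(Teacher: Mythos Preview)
Your argument is correct and arrives at the same bounds as the paper, but your Step~3 is organized differently from the paper's proof. The paper first substitutes the closed-loop form $u_{0}\gamma=\theta(t)+\beta$ with $\theta(t)=-3(T-t)\bigl(\tfrac{x_{0}(0)}{T^{3}}+\tfrac{\beta}{2T}\bigr)$, so that the $z$-system reads $\dot z=\beta(A-bb^{\mathrm T}P)z+\theta Az+\psi$; it then takes the Lyapunov-like function $V=\gamma z^{\mathrm T}Pz$, applies the PLE in the form $A^{\mathrm T}P+PA-2Pbb^{\mathrm T}P\le-\gamma P$ to the nominal part, and treats $\theta Az$ as a bounded perturbation via Young's inequality together with the estimate $A^{\mathrm T}PA\le 3n^{2}\gamma^{2}P$ of Lemma~\ref{lm1}(4), obtaining $\dot V\le -\gamma V+\theta_{0}V$ with a constant $\theta_{0}$ depending on $|x_{0}(0)|$. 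You instead keep $u_{0}\gamma$ intact, use $V=z^{\mathrm T}Pz$, and exploit the algebraic PLE identity $2z^{\mathrm T}PAz=(b^{\mathrm T}Pz)^{2}-\gamma V$ to write $\dot V=(u_{0}\gamma-2\beta)(b^{\mathrm T}Pz)^{2}-u_{0}\gamma^{2}V+\cdots$; the cancellation $(T-t)\cdot\gamma=1$ in the $x_{0}(0)$-part of $u_{0}\gamma-2\beta$ then yields the constant bound $\tfrac{3n|x_{0}(0)|}{T^{3}}V$ without any $A^{\mathrm T}PA$-estimate or Young step. The payoff of your route is that it avoids Lemma~\ref{lm1}(4) entirely and produces slightly sharper constants (your $\upsilon_{10}$ is of order $n/T^{3}$ versus the paper's $\sqrt{3}\,n/T^{3}$); the payoff of the paper's splitting is modularity, since the same $\theta$-perturbation template is reused verbatim in the output-feedback proof of Theorem~2. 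Both arguments hinge on the same mechanism: the specific choice $\beta=\tfrac{2n+\delta_{\mathrm c}}{n}+2d\Lambda$ makes the coefficient of $\int_{0}^{t}\gamma\,\mathrm ds=\ln\tfrac{T}{T-t}$ in the integrated inequality collapse to a fixed negative number, while all $x_{0}(0)$-dependent remainders integrate to terms linear in $t$.
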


\begin{proof}
Consider the $x_{0}$-subsystem in system (\ref{sys}). The closed-loop system
consisting of (\ref{sys}) and the first equation in (\ref{u_0_u}) can be
written as
\[
\dot{x}_{0}=-\frac{3}{T-t}x_{0}-\frac{\beta}{2}(T-t),
\]
whose solution can be given as%
\begin{align}
x_{0}(t)=  &  \mathrm{\exp}\left(  -\int_{0}^{t}\frac{3}{T-s}\mathrm{d}%
s\right)  x_{0}(0)\nonumber \\
&  -\frac{\beta}{2}\int_{0}^{t}\exp \left(  -\int_{s}^{t}\frac{3}{T-\tau
}\mathrm{d}\tau \right)  (T-s)\mathrm{d}s\nonumber \\
=  &  \left(  \frac{T-t}{T}\right)  ^{3}x_{0}(0)-\frac{\beta}{2}%
(T-t)^{2}+\frac{\beta}{2T}(T-t)^{3}\nonumber \\
=  &  (T-t)^{2}\left(  \frac{T-t}{T^{3}}x_{0}(0)-\frac{\beta t}{2T}\right)  ,
\label{x_0e}%
\end{align}
which proves (\ref{A}). Then the control $u_{0}$ can be written as the
open-loop form%
\begin{align}
u_{0}(t)=  &  -\frac{3}{T-t}x_{0}-\frac{\beta}{2}(T-t)\nonumber \\
=  &  -\frac{3}{T-t}\left(  \left(  T-t\right)  ^{2}\left(  \frac{T-t}{T^{3}%
}x_{0}(0)-\frac{\beta t}{2T}\right)  \right) \nonumber \\
&  -\frac{\beta}{2}(T-t)\nonumber \\
=  &  (T-t)\left(  3\left(  T-t\right)  \left(  -\frac{x_{0}(0)}{T^{3}}%
-\frac{\beta}{2T}\right)  +\beta \right) \nonumber \\
\triangleq &  (T-t)\left(  \theta(t)+\beta \right)  , \label{u0}%
\end{align}
which proves (\ref{B}).

Consider the $x$-subsystem in system (\ref{sys}). By using (\ref{z_i_x_i}),
system (\ref{sys}) can be written as%
\[
\left \{
\begin{array}
[c]{l}%
\dot{z}_{i}=\frac{n-i}{T-t}z_{i}+(  \beta+\theta(t))
z_{i+1}+(  T-t)  ^{i-n}\phi_{i},\\
\dot{z}_{n}=u+\phi_{n},
\end{array}
\right.
\]
where $i=1,2,\ldots,n-1$, which can be re-expressed as%
\[
\dot{z}=\frac{1}{T-t}A_{0}z+\theta(t)Az+\beta Az+\varphi+bu,
\]
where $\varphi=\varphi(t,u,x)=[\varphi_{1}(t,u,x),\varphi_{2}(t,u,x),\ldots
,\varphi_{n}$ $(t,u,x)]^{\mathrm{T}}$, and%
\[
A_{0}=\mathrm{diag}\{n-1,n-2,\ldots,2,1\}.
\]
Such a system can be further expressed as%
\begin{equation}
\dot{z}=\beta \left(  A-bb^{\mathrm{T}}P\right)  z+\theta(t)Az+\psi
,\label{z_dot}%
\end{equation}
where $\psi=\psi(t,u,z)=[\psi_{1}(t,u,z),\psi_{2}(t,u,z),\ldots,\psi_{n}$
$(t,u,z)]^{\mathrm{T}}$. Choose the Lyapunov-like function%
\[
V\left(  t,z(t)\right)  =\gamma(t) z^{\mathrm{T}}(t)P(\gamma(t))z(t),
\]
whose time-derivative along the trajectory of the closed-loop system
(\ref{z_dot}) can be written as%
\begin{align}
\dot{V}\left(  t,z\right)  = &  \dot{\gamma}z^{\mathrm{T}}Pz+\gamma \dot
{\gamma}z^{\mathrm{T}}\frac{\mathrm{d}P}{\mathrm{d}\gamma}z\nonumber \\
&  +\beta \gamma z^{\mathrm{T}}\left(  A^{\mathrm{T}}P+PA-2Pbb^{\mathrm{T}%
}P\right)  z\nonumber \\
&  +2\gamma \theta z^{\mathrm{T}}PAz+2\gamma z^{\mathrm{T}}P\psi \nonumber \\
\leq &  \dot{\gamma}z^{\mathrm{T}}Pz+\frac{\delta_{\mathrm{c}}}{n}\dot{\gamma
}z^{\mathrm{T}}Pz-\beta \gamma^{2}z^{\mathrm{T}}Pz\nonumber \\
&  +2\gamma \theta z^{\mathrm{T}}PAz+2\gamma z^{\mathrm{T}}P\psi,\label{v_dot}%
\end{align}
where we have used (\ref{ple1}) and (\ref{dP_bounded}). By using the Young's
inequalities with $k_{0}>0$ and $k_{1}>0$, we have%
\begin{align*}
2z^{\mathrm{T}}PAz &  \leq k_{0}\gamma z^{\mathrm{T}}Pz+\frac{1}{k_{0}\gamma
}z^{\mathrm{T}}A^{\mathrm{T}}PAz,\\
2z^{\mathrm{T}}P\psi &  \leq k_{1}\gamma z^{\mathrm{T}}Pz+\frac{1}{k_{1}%
\gamma}\psi^{\mathrm{T}}P\psi.
\end{align*}
In addition, it can be obtained from Lemma \ref{lem1}, (\ref{P}),
and (\ref{kama}) that%
\begin{align*}
\psi^{\mathrm{T}}P\psi &  =\psi^{\mathrm{T}}\gamma L_{n}P_{n}L_{n}\psi
\leq \mathit{\Lambda}\gamma \left(  L_{n}\psi \right)  ^{\mathrm{T}}\left(
L_{n}\psi \right)  \\
&  =d^{2}\mathit{\Lambda}\gamma^{3}\left(  L_{n}z\right)  ^{\mathrm{T}}\left(
L_{n}z\right)  \leq d^{2}\mathit{\Lambda}^{2}\gamma^{2}z^{\mathrm{T}}Pz.
\end{align*}
With this and (\ref{hh}) in Lemma \ref{lem1}, (\ref{v_dot}) can be continued as%
\begin{align*}
\dot{V}\left(  t,z\right)  \leq &  \dot{\gamma}z^{\mathrm{T}}Pz+\frac
{\delta_{\mathrm{c}}}{n}\dot{\gamma}z^{\mathrm{T}}Pz-\beta \gamma
^{2}z^{\mathrm{T}}Pz\\
&  +k_{0}\left \vert \theta \right \vert \gamma^{2}z^{\mathrm{T}}Pz+\frac
{\left \vert \theta \right \vert }{k_{0}}z^{\mathrm{T}}A^{\mathrm{T}}PAz\\
&  +k_{1}\gamma^{2}z^{\mathrm{T}}Pz+\frac{1}{k_{1}}\psi^{\mathrm{T}}P\psi \\
\leq &  \dot{\gamma}z^{\mathrm{T}}Pz+\frac{\delta_{\mathrm{c}}}{n}\dot{\gamma
}z^{\mathrm{T}}Pz-\beta \gamma^{2}z^{\mathrm{T}}Pz\\
&  +k_{0}\left \vert \theta \right \vert \gamma^{2}z^{\mathrm{T}}Pz+\frac
{3n^{2}\left \vert \theta \right \vert }{k_{0}}\gamma^{2}z^{\mathrm{T}}Pz\\
&  +k_{1}\gamma^{2}z^{\mathrm{T}}Pz+\frac{1}{k_{1}}d^{2}\mathit{\Lambda}%
^{2}\gamma^{2}z^{\mathrm{T}}Pz\\
= &  \left(  \dot{\gamma}+\frac{\delta_{\mathrm{c}}}{n}\dot{\gamma}%
-\beta \gamma^{2}+k_{1}\gamma^{2}+\frac{1}{k_{1}}d^{2}\mathit{\Lambda}%
^{2}\gamma^{2}\right)  z^{\mathrm{T}}Pz\\
&  +\left(  k_{0}\left \vert \theta \right \vert \gamma^{2}+\frac{3n^{2}}{k_{0}%
}\left \vert \theta \right \vert \gamma^{2}\right)  z^{\mathrm{T}}Pz\\
= &  \left(  1+\frac{\delta_{\mathrm{c}}}{n}\right)  \mu_{z}z^{\mathrm{T}%
}Pz+2\left \vert \theta \right \vert \sqrt{3}n\gamma^{2}z^{\mathrm{T}}Pz\\
= &  -\gamma^{2}z^{\mathrm{T}}Pz+\theta_{0}\gamma z^{\mathrm{T}}Pz,
\end{align*}
where%
\begin{align*}
\theta_{0} &  =6\left(  \frac{\left \vert x_{0}(0)\right \vert }{T^{3}}%
+\frac{\beta}{2T}\right)  \sqrt{3}n,\\
\mu_{z} &  =\dot{\gamma}-\frac{n}{n+\delta_{\mathrm{c}}}\beta \gamma^{2}%
+\frac{2nd\mathit{\Lambda}}{n+\delta_{\mathrm{c}}}\gamma^{2}=-\frac
{n}{n+\delta_{\mathrm{c}}}\gamma^{2},
\end{align*}
and we have taken $k_{0}=\sqrt{3}n$ and $k_{1}=d\mathit{\Lambda}$. By using
the comparison lemma in \cite{Khalil}, $V(t,z(t))$ satisfies
\begin{align}
V(t,z(t))  & \leq \exp \left(  -\int_{0}^{t}\gamma(s)\mathrm{d}s\right)
\mathrm{e}^{\theta_{0}t}V(0,z(0))\nonumber \\
& =(  T-t)  \mathrm{e}^{\theta_{0}t}V(0,z(0)),\quad t\in
\lbrack0,T).\label{V_t}%
\end{align}
It follows from (\ref{P}) that%
\[
V(t,z(t))=\gamma z^{\mathrm{T}}Pz\geq \gamma^{2}\lambda_{\min}(L_{n}(\gamma
_{0})P_{n}L_{n}(\gamma_{0}))\Vert z(t)\Vert^{2}.
\]
Therefore, we have%
\[
\left \Vert z\left(  t\right)  \right \Vert ^{2}\leq \frac{\left(  T-t\right)
^{3}\mathrm{e}^{\theta_{0}t}V\left(  0,z\left(  0\right)  \right)  }%
{\lambda_{\min}\left(  L_{n}\left(  \gamma_{0}\right)  P_{n}L_{n}\left(
\gamma_{0}\right)  \right)  },
\]
which, together with (\ref{z_i_x_i}), indicates that%
\begin{align}
\left \Vert x\left(  t\right)  \right \Vert  &  \leq \left \Vert L_{n}\left(
T-t\right)  \right \Vert \left \Vert z\left(  t\right)  \right \Vert \nonumber \\
&  \leq \left \Vert L_{n}\left(  T\right)  \right \Vert \frac{\left(  T-t\right)
^{\frac{3}{2}}\mathrm{e}^{\frac{1}{2}\theta_{0}t}\sqrt{V\left(  0,z\left(
0\right)  \right)  }}{\sqrt{\lambda_{\min}\left(  L_{n}\left(  \gamma
_{0}\right)  P_{n}L_{n}\left(  \gamma_{0}\right)  \right)  }}.\label{xi}%
\end{align}
In addition, it can be obtained from (\ref{z_i_x_i}) and (\ref{P}) that%
\begin{align}
&  V\left(  0,z\left(  0\right)  \right)  =\gamma_{0}z^{\mathrm{T}}(0)P\left(
\gamma_{0}\right)  z(0)\nonumber \\
&  \leq \gamma_{0}^{2}\lambda_{\max}\left(  L_{n}\left(  \gamma_{0}\right)
P_{n}L_{n}\left(  \gamma_{0}\right)  \right)  \left \Vert z(0)\right \Vert
^{2}\nonumber \\
&  \leq \gamma_{0}^{2}\lambda_{\max}\left(  L_{n}\left(  \gamma_{0}\right)
P_{n}L_{n}\left(  \gamma_{0}\right)  \right)  \left \Vert L_{n}\left(  \frac
{1}{T}\right)  \right \Vert ^{2}\left \Vert x(0)\right \Vert ^{2},\label{V_0}%
\end{align}
combining which with (\ref{xi}), indicates that%
\begin{align*}
\left \Vert x\left(  t\right)  \right \Vert  &  \leq \left \Vert L_{n}\left(
T\right)  \right \Vert \left(  T-t\right)  ^{\frac{3}{2}}\frac{\mathrm{e}%
^{\frac{1}{2}\theta_{0}t}\sqrt{V\left(  0,z\left(  0\right)  \right)  }}%
{\sqrt{\lambda_{\min}\left(  L_{n}\left(  \gamma_{0}\right)  P_{n}L_{n}\left(
\gamma_{0}\right)  \right)  }}\\
\leq &  \gamma_{0}\sqrt{\omega_{1}}\left \Vert L_{n}\left(  T\right)
\right \Vert \left(  T-t\right)  ^{\frac{3}{2}}\mathrm{e}^{\frac{1}{2}%
\theta_{0}t}\left \Vert L_{n}\left(  \frac{1}{T}\right)  \right \Vert \left \Vert
x(0)\right \Vert ,
\end{align*}
where $\omega_{1}=\lambda_{\max}(L_{n}(\gamma_{0})P_{n}L_{n}(\gamma
_{0}))/\lambda_{\min}(L_{n}(\gamma_{0})P_{n}L_{n}$ $(\gamma_{0}))$. Thus we
have proven (\ref{C}).

Finally, it follows from (\ref{bb}), (\ref{V_t}), (\ref{V_0}) that%
\begin{align*}
\Vert u(t)\Vert^{2}  &  =\beta^{2}z^{\mathrm{T}}P(\gamma(t))bb^{\mathrm{T}%
}P(\gamma(t))z\\
&  \leq \beta^{2}b^{\mathrm{T}}P(\gamma(t))bz^{T}P(\gamma(t))z\\
&  =\beta^{2}n\gamma z^{T}P(\gamma(t))z
 =\beta^{2}nV(t,z(t))\\
&  \leq \beta^{2}n(T-t)\mathrm{e}^{\theta_{0}t}V(0,z(0))\\
&  =\beta^{2}n(T-t)\mathrm{e}^{\theta_{0}t}\lambda_{\max}(L_{n}(\gamma
_{0})P_{n}L_{n}(\gamma_{0}))\\
&  \times \left \Vert L_{n}\left(  \frac{1}{T}\right)  \right \Vert ^{2}\Vert
x(0)\Vert^{2},
\end{align*}
which proves (\ref{D}). The proof is finished.
\end{proof}

If the $x_{0}$-subsystem in system (\ref{sys}) is of the following form
\cite{Zhang20tac}%
\begin{equation}
\dot{x}_{0}=u_{0}+c_{0}x_{0}, \label{x0}%
\end{equation}
where $c_{0}$ is a known constant, we can get the following result.

\begin{proposition}
Let $T>0$ be a prescribed time, $\beta=(2n+\delta_{\mathrm{c}})\mathrm{e}%
^{\left \vert c_{0}\right \vert T}/n+2d\mathit{\Lambda}\mathrm{e}^{\left \vert
c_{0}\right \vert T}$, and $\gamma_{0}=1/T$. Then, for any initial condition
and all $t\in \lbrack0,T)$, the state and control of the closed-loop system
consisting of (\ref{sys}) with $x_{0}$-subsystem be defined in (\ref{x0}) and
the following smooth time-varying feedback
\begin{equation}
\left \{
\begin{array}
[c]{rl}%
u_{0}\left(  t\right)  & =-\frac{3}{T-t}x_{0}-\frac{\beta}{2}\mathrm{e}%
^{c_{0}t}(T-t),\\[1.5mm]
u\left(  t\right)  & =-\beta \mathrm{e}^{c_{0}t}b^{\mathrm{T}}P\left(
\gamma \left(  t\right)  \right)  L_{n}\left(  \frac{1}{T-t}\right)  x,\\[1.5mm]
\gamma \left(  t\right)  & =\frac{T}{T-t}\gamma_{0},
\end{array}
\right.  \label{u_0_u1}%
\end{equation}
satisfy%
\begin{align*}
\left \vert x_{0}(t)\right \vert  &  \leq \left(  T-t\right)  ^{2}\mathrm{e}%
^{c_{0}t}\left(  \frac{\left(  T-t\right)  \left \vert x_{0}(0)\right \vert
}{T^{3}}+\frac{\beta t}{2T}\right)  ,\\
\left \vert u_{0}(t)\right \vert  &  \leq \left(  T-t\right)  \mathrm{e}^{c_{0}%
t}\left(  3\left(  T-t\right)  \left(  \frac{\left \vert x_{0}(0)\right \vert
}{T^{3}}+\frac{\beta}{2T}\right)  +\beta \right)  ,\\
\left \Vert x\left(  t\right)  \right \Vert  &  \leq \upsilon_{11}\left(
T-t\right)  ^{\frac{3}{2}}\mathrm{e}^{\upsilon_{12}\left \vert x_{0}%
(0)\right \vert t}\left \Vert x\left(  0\right)  \right \Vert ,\\
\left \Vert u\left(  t\right)  \right \Vert  &  \leq \upsilon_{21}(T-t)^{\frac
{1}{2}}\mathrm{e}^{\upsilon_{22}\left \vert x_{0}(0)\right \vert t}\left \Vert
x\left(  0\right)  \right \Vert ,
\end{align*}
where $\upsilon_{11}$, $\upsilon_{12}$, $\upsilon_{21}$ and $\upsilon_{22}$
are some positive constants.
\end{proposition}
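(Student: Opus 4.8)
The plan is to mirror the proof of Theorem~\ref{the1} almost verbatim, absorbing the constant $c_0$ through an exponential change of variable in the $x_0$-subsystem and through the inflation factor $\mathrm{e}^{|c_0|T}$ in $\beta$. The substitution $x_0(t)=\mathrm{e}^{c_0 t}w(t)$ in $\dot{x}_0=u_0+c_0x_0$ together with the first line of (\ref{u_0_u1}) cancels the $c_0x_0$ term exactly, leaving $\dot{w}=-\frac{3}{T-t}w-\frac{\beta}{2}(T-t)$, which is precisely the closed-loop $x_0$-equation already solved in (\ref{x_0e}). Hence $w(t)=(T-t)^{2}(\frac{T-t}{T^{3}}w(0)-\frac{\beta t}{2T})$ with $w(0)=x_0(0)$, and multiplying back by $\mathrm{e}^{c_0 t}$ gives the claimed bound on $|x_0(t)|$. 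Rewriting $u_0$ in open-loop form as in (\ref{u0}) yields $u_0(t)=\mathrm{e}^{c_0 t}(T-t)(\theta(t)+\beta)$ with exactly the same $\theta(t)=3(T-t)(-x_0(0)/T^{3}-\beta/(2T))$ as in Theorem~\ref{the1}, from which the bound on $|u_0(t)|$ is immediate.

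Next I would repeat the $z$-analysis under the transformation (\ref{z_i_x_i}). Because the factor $\mathrm{e}^{c_0 t}$ now rides on both $u_0$ and the controller $u$, the $x$-subsystem becomes $\dot{z}=\mathrm{e}^{c_0 t}\beta(A-bb^{\mathrm{T}}P)z+\mathrm{e}^{c_0 t}\theta(t)Az+\psi$, which differs from (\ref{z_dot}) only by the factor $\mathrm{e}^{c_0 t}$ on the first two terms; note that $\psi$ is unchanged and hence unaffected by $c_0$. With the same $V(t,z)=\gamma z^{\mathrm{T}}P(\gamma)z$, the PLE (\ref{ple1}) now contributes $-\mathrm{e}^{c_0 t}\beta\gamma^{2}z^{\mathrm{T}}Pz$, while the $\psi$-term still contributes $2d\mathit{\Lambda}\gamma^{2}z^{\mathrm{T}}Pz$ via Lemma~\ref{lem1}.

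The crux is a two-sided control of $\mathrm{e}^{c_0 t}$ on $[0,T)$. On the stabilizing term I would use the lower bound $\mathrm{e}^{c_0 t}\geq\mathrm{e}^{-|c_0|T}$, so that $\mathrm{e}^{c_0 t}\beta\geq\frac{2n+\delta_{\mathrm{c}}}{n}+2d\mathit{\Lambda}$, which is exactly the $\beta$ of Theorem~\ref{the1}; this is precisely why $\beta$ is inflated by $\mathrm{e}^{|c_0|T}$. The negative drift is therefore at least as strong as before, and after applying Young's inequalities with $k_0=\sqrt{3}n$, $k_1=d\mathit{\Lambda}$ the coefficient of $z^{\mathrm{T}}Pz$ collapses to $-\gamma^{2}$ exactly as in Theorem~\ref{the1}. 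On the perturbation term $2\sqrt{3}n\,\mathrm{e}^{c_0 t}|\theta|\gamma^{2}z^{\mathrm{T}}Pz$ I would instead use the upper bound $\mathrm{e}^{c_0 t}\leq\mathrm{e}^{|c_0|T}$, obtaining $\dot{V}\leq-\gamma^{2}z^{\mathrm{T}}Pz+\theta_0'\gamma z^{\mathrm{T}}Pz$ with a finite constant $\theta_0'$ analogous to $\theta_0$ but carrying the extra factor $\mathrm{e}^{|c_0|T}$.

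Finally, the comparison lemma gives $V(t,z(t))\leq\frac{T-t}{T}\mathrm{e}^{\theta_0' t}V(0,z(0))$ exactly as in (\ref{V_t}), and the state and control bounds follow verbatim from (\ref{xi})--(\ref{D}), with $\theta_0$ replaced by $\theta_0'$ and the extra bounded factor $\mathrm{e}^{2c_0 t}\leq\mathrm{e}^{2|c_0|T}$ arising from $\|u\|^{2}=\beta^{2}\mathrm{e}^{2c_0 t}z^{\mathrm{T}}Pbb^{\mathrm{T}}Pz$ absorbed into the constant $\upsilon_{21}$. The main obstacle, and the only genuinely new point relative to Theorem~\ref{the1}, is exactly this simultaneous two-sided estimate of $\mathrm{e}^{c_0 t}$: a uniform lower bound is needed to preserve the stabilizing gain, a uniform upper bound is needed to keep the perturbation gain finite, and both are supplied by the factor $\mathrm{e}^{|c_0|T}$ in $\beta$ together with the boundedness of $\mathrm{e}^{c_0 t}$ on $[0,T)$.
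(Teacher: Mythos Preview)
Your proposal is correct and follows essentially the same route as the paper: the paper solves the $x_0$-equation by direct integration (your substitution $x_0=\mathrm{e}^{c_0 t}w$ is an equivalent, slightly cleaner device yielding the same explicit formula), defines $\theta(t)$ with the factor $\mathrm{e}^{c_0 t}$ built in, and then simply states that the remainder is ``similar as the proof of Theorem~\ref{the1}, and is omitted for brevity.'' Your two-sided estimate $\mathrm{e}^{-|c_0|T}\leq \mathrm{e}^{c_0 t}\leq \mathrm{e}^{|c_0|T}$, which uses the inflation of $\beta$ to recover exactly the Theorem~\ref{the1} drift while keeping the $\theta$-perturbation bounded, is precisely the detail the paper leaves implicit.
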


\begin{proof}
The closed-loop system consisting of (\ref{x0}) and $u_{0}\left(  t\right)  $
in the first equation in (\ref{u_0_u1}) can be written as
\[
\dot{x}_{0}(t)=-\frac{3}{T-t}x_{0}(t)+c_{0}x_{0}(t)-\frac{\beta}{2}\mathrm{e}^{c_{0}%
t}(T-t),
\]
whose solution can be given as%
\begin{align*}
x_{0}(t)=  &  \mathrm{e}^{c_{0}t}\mathrm{\exp}\left(  -\int_{0}^{t}\frac
{3}{T-s}\mathrm{d}s\right)  x_{0}(0)\\
&  -\frac{\beta}{2}\int_{0}^{t}\mathrm{e}^{c_{0}t}\exp \left(  -\int_{s}%
^{t}\frac{3}{T-\tau}\mathrm{d}\tau \right)  (T-s)\mathrm{d}s\\
=  &  \left(  \frac{T-t}{T}\right)  ^{3}x_{0}(0)\mathrm{e}^{c_{0}t}%
-\frac{\beta}{2}\mathrm{e}^{c_{0}t}\left(  T-t\right)  ^{2}\\
&  +\frac{\beta \mathrm{e}^{c_{0}t}\left(  T-t\right)  ^{3}}{2T}\\
=  &  \left(  T-t\right)  ^{2}\left(  \frac{\left(  T-t\right)  x_{0}%
(0)}{T^{3}}\mathrm{e}^{c_{0}t}-\frac{t\beta}{2T}\mathrm{e}^{c_{0}t}\right)  ,
\end{align*}
which proves the expression of $\left \vert x_{0}(t)\right \vert $ in this
proposition. Then the control $u_{0}$ can be written as the open-loop form%
\begin{align*}
&  u_{0}(t)=-\frac{3}{T-t}x_{0}-\frac{\beta}{2}\mathrm{e}^{c_{0}t}(T-t)\\
  =&-\frac{3}{T-t}\left(  \left(  T-t\right)  ^{2}\left(  \frac{\left(
T-t\right)  x_{0}(0)}{T^{3}}\mathrm{e}^{c_{0}t}-\frac{t\beta}{2T}%
\mathrm{e}^{c_{0}t}\right)  \right) \\
&  -\frac{\beta}{2}\mathrm{e}^{c_{0}t}(T-t)\\
  =&\left(  T-t\right)  \left(  -3\left(  T-t\right)  \left(  \frac{x_{0}%
(0)}{T^{3}}\mathrm{e}^{c_{0}t}+\frac{\beta \mathrm{e}^{c_{0}t}}{2T}\right)
+\beta \mathrm{e}^{c_{0}t}\right) \\
&  \triangleq(T-t)\left(  \theta(t)+\beta \mathrm{e}^{c_{0}t}\right)  ,
\end{align*}
which proves the expression of $\left \vert u_{0}(t)\right \vert $ in this
proposition. The rest of the proof is similar as the proof of Theorem
\ref{the1}, and is omitted for brevity.
\end{proof}

When $c_{0}=0$, the above theorem reduces to Theorem \ref{the1}. In addition,
similar to Remark 4 in \cite{zhou23auto}, the proposed method can also
handle the nonholonomic system (\ref{sys}) with the nonlinear functions
satisfying
\[
\left \vert \phi_{i}\left(  t,u,x\right)  \right \vert \leq \varphi
(x_{0})(\left \vert x_{1}\right \vert +\left \vert x_{2}\right \vert
+\cdots+\left \vert x_{i}\right \vert )
\]
in which $\varphi(x_{0})$ are some known positive continuous functions. Due to
space limitations, the details are not given here.

\section{\label{sec3_1}Observed-Based Output Feedback}

To state a solution to Problem \ref{prob3}, we first denote the parameter
$\beta$ as
\begin{equation}
\beta=\max \left \{  \beta_{1},\beta_{2}\right \}  , \label{beta}%
\end{equation}
where
\begin{align*}
\beta_{1}  &  =8\sqrt{2}nd\mathit{\Lambda}cQ_{n}P_{n}Q_{n}c^{\mathrm{T}%
}+2\left(  2+\frac{\delta_{\mathrm{c}}}{n}\right)  ,\\
\beta_{2}  &  =4\sqrt{2}d\mathit{\Lambda}+2\left(  2n+2-\frac{1}{n}\right)  ,
\end{align*}
in which $d$ is defined in (\ref{d}).

\begin{theorem}
Let $T>0$ be a prescribed time and $\gamma_{0}=1/T$. Design the smooth
time-varying observed-based output feedback
\begin{equation}
\left \{
\begin{array}
[c]{l}%
\dot{\xi}(t)=\beta A\xi(t)+bu(t)+\beta Q(\gamma)c^{\mathrm{T}}\left(
\gamma^{n-1}y_{2}-c\xi(t)\right)  ,\\[1mm]
u(t)=-\beta b^{\mathrm{T}}P(\gamma)\xi(t),\\[1mm]
\gamma=\gamma(t)=\frac{T}{T-t}\gamma_{0}.
\end{array}
\right.  \label{obser_feed}%
\end{equation}
Then, for any initial condition and all $t\in \lbrack0,T)$, the states
$x_{0}(t)$, $x(t)$, $\xi \left(  t\right)  $, and controls $u_{0}(t)$,
$u\left(  t\right)  $ of the closed-loop system consisting of (\ref{sys}) and
(\ref{obser_feed}) satisfy, (\ref{A}), (\ref{B}) and
\begin{align}
\left \Vert x\left(  t\right)  \right \Vert  &  \leq \upsilon_{3}\mathrm{e}%
^{\upsilon_{30}\left \vert x_{0}\right \vert t}\left(  T-t\right)  ^{\frac{3}%
{2}}\left(  \left \Vert x\left(  0\right)  \right \Vert +\left \Vert \xi \left(
0\right)  \right \Vert \right)  ,\label{E}\\
\left \Vert \xi \left(  t\right)  \right \Vert  &  \leq \upsilon_{4}%
\mathrm{e}^{\upsilon_{40}\left \vert x_{0}\right \vert t}\left(  T-t\right)
^{\frac{3}{2}}\left(  \left \Vert x\left(  0\right)  \right \Vert +\left \Vert
\xi \left(  0\right)  \right \Vert \right)  ,\label{F}\\
\left \Vert u\left(  t\right)  \right \Vert  &  \leq \upsilon_{5}\mathrm{e}%
^{\upsilon_{50}\left \vert x_{0}\right \vert t}\left(  T-t\right)  ^{\frac{1}%
{2}}\left(  \left \Vert x\left(  0\right)  \right \Vert +\left \Vert \xi \left(
0\right)  \right \Vert \right)  , \label{G}%
\end{align}
where $\upsilon_{3}$, $\upsilon_{30}$, $\upsilon_{4}$, $\upsilon_{40}$,
$\upsilon_{5}$ and $\upsilon_{50}$ are some positive constants, namely,
Problem \ref{prob3} is solved.
\end{theorem}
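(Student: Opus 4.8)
The plan is to reduce everything to the scaled variable $z=L_{n}(\frac{1}{T-t})x$ exactly as in Theorem \ref{the1}. The $x_{0}$-channel and $u_{0}$ are unchanged from (\ref{u_0_u}), so (\ref{A}), (\ref{B}) and the bounded signal $\theta(t)$ (with the key property that $|\theta(t)|\gamma$ is bounded, since $\theta$ carries a factor $T-t=1/\gamma$) carry over verbatim. First I would introduce the estimation error $e=z-\xi$. Using $\gamma^{n-1}y_{2}=\gamma^{n-1}x_{1}=cz$, the injection in (\ref{obser_feed}) is $\beta Qc^{\mathrm{T}}c(z-\xi)$, and a short computation gives the coupled dynamics $\dot{\xi}=\beta(A-bb^{\mathrm{T}}P)\xi+\beta Qc^{\mathrm{T}}ce$ and $\dot{e}=\beta(A-Qc^{\mathrm{T}}c)e+\theta Az+\psi$ with $z=\xi+e$. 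The payoff of this split is that the uncertainty $\psi$ and the term $\theta Az$ enter only the error channel, while $\dot{\xi}$ sees the observer solely through $ce$.

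The two Lyapunov-like functions I would use are $V_{\xi}=\gamma\xi^{\mathrm{T}}P(\gamma)\xi$, certified by the PLE (\ref{ple1}) exactly as in Theorem \ref{the1}, and, for the error, the $\gamma$-weighted dual function $V_{e}=\gamma^{2n+1}e^{\mathrm{T}}Q^{-1}(\gamma)e$. The hard part will be the following scaling obstruction: by (\ref{P}) and (\ref{Q}), $P\sim\gamma L_{n}(\cdot)L_{n}$ while $Q^{-1}\sim\gamma^{-(2n-1)}L_{n}(\cdot)L_{n}$, so the \emph{natural} choice $\gamma e^{\mathrm{T}}Q^{-1}e$ is mismatched with $V_{\xi}$ by a full factor $\gamma^{2n}$, and no constant weighting of the two channels can survive as $\gamma\to\infty$. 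The extra $\gamma^{2n}$ in $V_{e}$ is precisely what restores $V_{e}\sim\gamma^{2}(L_{n}e)^{\mathrm{T}}Q_{n}^{-1}(L_{n}e)$, matching $V_{\xi}\sim\gamma^{2}(L_{n}\xi)^{\mathrm{T}}P_{n}(L_{n}\xi)$. For the decay I would left/right multiply (\ref{ple2}) by $Q^{-1}$ to obtain $(A-Qc^{\mathrm{T}}c)^{\mathrm{T}}Q^{-1}+Q^{-1}(A-Qc^{\mathrm{T}}c)=-\gamma Q^{-1}-c^{\mathrm{T}}c$, which produces a decay $-\beta\gamma^{2n+2}e^{\mathrm{T}}Q^{-1}e$ \emph{plus} a crucial extra $-\beta\gamma^{2n+1}(ce)^{2}$; together with $\mathrm{d}Q^{-1}/\mathrm{d}\gamma\le-(n\gamma)^{-1}Q^{-1}$ (from (\ref{eqdq})) this yields $\dot{V}_{e}\le(2n+1-\tfrac{1}{n}-\beta)\gamma V_{e}-\beta\gamma^{2n+1}(ce)^{2}+(\text{disturbance})$.

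The disturbances $\theta Az+\psi$ in $\dot{V}_{e}$ I would handle by Young's inequality with a $\gamma$-proportional weight, using a $Q^{-1}$-analogue of (\ref{hh}), namely $A^{\mathrm{T}}Q^{-1}A\le\mathit{\Lambda}^{2}\gamma^{2}Q^{-1}$ (which follows from the scaling identity $L_{n}AL_{n}^{-1}=\gamma A$), together with Lemma \ref{lem1}; because $|\theta|\gamma$ is bounded the $\theta$-terms become bounded-coefficient, whereas the $\psi$-terms contribute $d\mathit{\Lambda}$-weighted $\gamma$-scaled terms that are absorbed by taking $\beta$ large — this is the origin of the $d\mathit{\Lambda}$ pieces in $\beta_{1},\beta_{2}$. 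The genuine obstacle is the controller/observer cross-coupling: the injection in $\dot{V}_{\xi}$ gives $2\beta\gamma(\xi^{\mathrm{T}}PQc^{\mathrm{T}})(ce)$, and Cauchy--Schwarz in the $P$-inner product yields $(\xi^{\mathrm{T}}PQc^{\mathrm{T}})^{2}\le(\xi^{\mathrm{T}}P\xi)\gamma^{2n+1}cQ_{n}P_{n}Q_{n}c^{\mathrm{T}}$, which is exactly where the constant $cQ_{n}P_{n}Q_{n}c^{\mathrm{T}}$ in $\beta_{1}$ enters. Splitting this cross term with a $\gamma^{-2n}$-weighted Young inequality leaves a harmless $\gamma V_{\xi}$ contribution plus a $\sim\beta\gamma^{2n+1}(ce)^{2}$ contribution that must be dominated by the observer's $-\alpha\beta\gamma^{2n+1}(ce)^{2}$; forcing this domination and balancing the residual $V_{e}$-into-$V_{\xi}$ exchange is what pins down the weight $\alpha$ and the threshold $\beta=\max\{\beta_{1},\beta_{2}\}$ of (\ref{beta}).

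With these choices the composite $W=V_{\xi}+\alpha V_{e}$ satisfies $\dot{W}\le(\theta_{0}^{\prime}-\gamma)W$ for a constant $\theta_{0}^{\prime}$ that is affine in $|x_{0}(0)|$ through $\theta_{0}$, so the comparison lemma gives $W(t)\le\frac{T-t}{T}\mathrm{e}^{\theta_{0}^{\prime}t}W(0)$ on $[0,T)$, exactly as in (\ref{V_t}). Converting back through $V_{\xi}\ge\gamma^{2}\lambda_{\min}(L_{n}(\gamma_{0})P_{n}L_{n}(\gamma_{0}))\Vert\xi\Vert^{2}$ and the analogous lower bound for $V_{e}$, then using $z=\xi+e$, $x=L_{n}^{-1}z$, and $W(0)\le C(\Vert x(0)\Vert^{2}+\Vert\xi(0)\Vert^{2})$, produces the $(T-t)^{3/2}$ bounds (\ref{E}), (\ref{F}); and $\Vert u(t)\Vert^{2}=\beta^{2}\xi^{\mathrm{T}}Pbb^{\mathrm{T}}P\xi\le\beta^{2}b^{\mathrm{T}}Pb\,\xi^{\mathrm{T}}P\xi=\beta^{2}n V_{\xi}$ (via (\ref{bb})) gives the $(T-t)^{1/2}$ bound (\ref{G}), precisely as (\ref{D}) was obtained in Theorem \ref{the1}. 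Combined with (\ref{A}), (\ref{B}) for the $x_{0}$-channel, all signals vanish at $T$, solving Problem \ref{prob3}.
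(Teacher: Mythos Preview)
Your proposal is correct and follows the paper's architecture closely: the same error variable $e=z-\xi$, the same pair of Lyapunov-like functions $V_{\xi}=\gamma\xi^{\mathrm{T}}P\xi$ and $V_{e}=\gamma^{2n+1}e^{\mathrm{T}}Q^{-1}e$, the same composite $W=V_{\xi}+\alpha V_{e}$, and the same endgame $\dot W\le(\lambda-\gamma)W$ followed by the comparison lemma. Your treatment of the $\theta$- and $\psi$-disturbances, the $(T-t)^{3/2}$ and $(T-t)^{1/2}$ extractions, and the $u$-bound via $b^{\mathrm{T}}Pb=n\gamma$ all match the paper.

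The one genuine technical divergence is the controller--observer cross term $2\beta\gamma\,\xi^{\mathrm{T}}PQc^{\mathrm{T}}(ce)$. The paper applies Young's inequality and then invokes the matrix bound $c^{\mathrm{T}}cQPQc^{\mathrm{T}}c\le n(cQ_{n}P_{n}Q_{n}c^{\mathrm{T}})\gamma^{2n+2}Q^{-1}$ (cited from \cite{zhou21tac}), thereby routing the cross term entirely into a $V_{e}$ contribution; the extra $-\beta\gamma^{2n+1}(ce)^{2}$ from the inverted PLE is simply discarded. You instead keep that $-(ce)^{2}$ term alive, bound $|\xi^{\mathrm{T}}PQc^{\mathrm{T}}|^{2}\le(\xi^{\mathrm{T}}P\xi)\,cQPQc^{\mathrm{T}}=\gamma^{2n+1}(cQ_{n}P_{n}Q_{n}c^{\mathrm{T}})\,\xi^{\mathrm{T}}P\xi$ by Cauchy--Schwarz in the $P$-inner product, and cancel the resulting $(ce)^{2}$ piece against $-\alpha\beta\gamma^{2n+1}(ce)^{2}$. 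Both routes produce the same $cQ_{n}P_{n}Q_{n}c^{\mathrm{T}}$ constant in $\beta_{1}$ and the same $d\mathit{\Lambda}$ structure in $\beta_{1},\beta_{2}$; your route is more self-contained (no external matrix inequality needed), while the paper's route avoids tracking the $(ce)^{2}$ budget explicitly. Either way the given $\beta=\max\{\beta_{1},\beta_{2}\}$ of (\ref{beta}) suffices.
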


\begin{proof}
In this proof, if not specified, we drop the dependence of variables on $t$,
for example, we use $P=P(\gamma(t))$ and $Q=Q(\gamma(t))$. Consider the
$x$-subsystem in (\ref{sys}). Denote $e=z-\xi$. By using (\ref{z_i_x_i}), the
closed-loop system can be written as%
\begin{align}
\dot{\xi} &  =\beta (  A-bb^{\mathrm{T}}P)  \xi+\beta Qc^{\mathrm{T}%
}ce,\label{xi_clo}\\
\dot{e} &  =\beta Ae+\theta Az+\psi-\beta Qc^{\mathrm{T}}ce.\label{e_clo}%
\end{align}
Choose the Lyapunov-like function%
\[
V_{\xi}(  t,\xi(t))  =\gamma(t) \xi^{\mathrm{T}}(t)P(\gamma(t))\xi(t),
\]
whose time-derivative along system (\ref{xi_clo}) can be written as%
\begin{align}
\dot{V}_{\xi}(  t,\xi )  = &  \dot{\gamma}\xi^{\mathrm{T}}%
P\xi+\gamma \dot{\gamma}\xi^{\mathrm{T}}\frac{\mathrm{d}P}{\mathrm{d}\gamma}%
\xi+2\beta \gamma \xi^{\mathrm{T}}PQc^{\mathrm{T}}ce\nonumber \\
&  +\beta \gamma \xi^{\mathrm{T}}(  A^{\mathrm{T}}P+PA-2Pbb^{\mathrm{T}%
}P)  \xi \nonumber \\
\leq &  \dot{\gamma}\xi^{\mathrm{T}}P\xi+\frac{\delta_{\mathrm{c}}\dot{\gamma
}}{n}\xi^{\mathrm{T}}P\xi \nonumber \\
&  -\beta \gamma^{2}\xi^{\mathrm{T}}P\xi+2\beta \gamma \xi^{\mathrm{T}%
}PQc^{\mathrm{T}}ce,\label{V_dot_xi}%
\end{align}
where we have used (\ref{ple1}) and (\ref{dP_bounded}). By using Young's
inequality with $k_{2}>0$, it follows that%
\[
2\beta \xi^{\mathrm{T}}PQc^{\mathrm{T}}ce\leq k_{2}\beta \gamma \xi^{\mathrm{T}%
}P\xi+\frac{\beta}{k_{2}\gamma}e^{\mathrm{T}}c^{\mathrm{T}}cQPQc^{\mathrm{T}%
}ce.
\]
In addition, according to Theorem 2 in \cite{zhou21tac}, we have
$c^{\mathrm{T}}cQPQc^{\mathrm{T}}c\leq \gamma^{2n+2}nk_{3}Q^{-1}$,
where $k_{3}=cQ_{n}P_{n}$ $Q_{n}c^{\mathrm{T}}$.

Therefore, (\ref{V_dot_xi}) can be continued as
\begin{align}
\dot{V}_{\xi}\left(  t,\xi \right)  \leq &  \dot{\gamma}\xi^{\mathrm{T}}%
P\xi+\frac{\delta_{\mathrm{c}}\dot{\gamma}}{n}\xi^{\mathrm{T}}P\xi-\beta
\gamma^{2}\xi^{\mathrm{T}}P\xi+k_{2}\beta \gamma^{2}\xi^{\mathrm{T}}%
P\xi \nonumber \\
&  +\frac{\beta}{k_{2}}\gamma^{2n+2}nk_{3}e^{\mathrm{T}}Q^{-1}e\nonumber \\
=  &  \left(  \frac{\dot{\gamma}}{\gamma}+\frac{\delta_{\mathrm{c}}\dot
{\gamma}}{n\gamma}-\beta \gamma+k_{2}\beta \gamma \right)  \gamma \xi^{\mathrm{T}%
}P\xi \nonumber \\
&  +\frac{\beta}{k_{2}}nk_{3}\gamma^{2n+2}e^{\mathrm{T}}Q^{-1}e. \label{V_xi}%
\end{align}
Take another Lyapunov-like function%
\[
V_{e}(  t,e(t))  =\gamma^{2n+1}(t)e^{\mathrm{T}}(t)Q^{-1}(\gamma(t))e(t),
\]
whose time-derivative along system (\ref{e_clo}) can be written as
\begin{align}
\dot{V}_{e}(t,e)=  &  (2n+1)\gamma^{2n}\dot{\gamma}e^{\mathrm{T}}%
Q^{-1}e+\gamma^{2n+1}\dot{\gamma}e^{\mathrm{T}}\frac{\mathrm{d}Q^{-1}%
}{\mathrm{d}\gamma}e\nonumber \\
&  +\gamma^{2n+1}(\beta Ae+\theta Az+\psi-\beta Qc^{\mathrm{T}}ce)^{\mathrm{T}%
}Q^{-1}e\nonumber \\
&  +\gamma^{2n+1}e^{\mathrm{T}}Q^{-1}(\beta Ae+\theta Az+\psi-Qc^{\mathrm{T}%
}ce)\nonumber \\
\leq &  (2n+1)\gamma^{2n}\dot{\gamma}e^{\mathrm{T}}Q^{-1}e-\frac{\dot{\gamma}%
}{n}\gamma^{2n}e^{\mathrm{T}}Q^{-1}e\nonumber \\
&  +\gamma^{2n+1}(\beta Ae+\theta Az+\psi-\beta Qc^{\mathrm{T}}ce)^{\mathrm{T}%
}Q^{-1}e\nonumber \\
&  +\gamma^{2n+1}e^{\mathrm{T}}Q^{-1}(\beta Ae+\theta Az+\psi-\beta
Qc^{\mathrm{T}}ce)\nonumber \\
\leq &  \left(  2n+1-\frac{1}{n}\right)  \gamma^{2n}\dot{\gamma}e^{\mathrm{T}%
}Q^{-1}e+2\gamma^{2n+1}e^{\mathrm{T}}Q^{-1}\psi \nonumber \\
&  +\gamma^{2n+1}\beta e^{\mathrm{T}}Q^{-1}(QA^{\mathrm{T}}+AQ-2Qc^{\mathrm{T}%
}cQ)Q^{-1}e\nonumber \\
&  +2\theta \gamma^{2n+1}e^{\mathrm{T}}Q^{-1}Az, \label{V_e_dot}%
\end{align}
where we have used (\ref{eqdq}). By using Young's inequalities with $k_{4}>0$
and $k_{5}>0$, we can get%
\begin{align*}
2z^{\mathrm{T}}A^{\mathrm{T}}Q^{-1}e  &  \leq \frac{k_{4}}{\gamma}%
z^{\mathrm{T}}A^{\mathrm{T}}Q^{-1}Az+\frac{\gamma}{k_{4}}e^{\mathrm{T}}%
Q^{-1}e,\\
2e^{\mathrm{T}}Q^{-1}\psi &  \leq \frac{k_{5}}{\gamma}\psi^{\mathrm{T}}%
Q^{-1}\psi+\frac{\gamma}{k_{5}}e^{\mathrm{T}}Q^{-1}e.
\end{align*}
On the one hand, it follows from Lemma \ref{lem1}, (\ref{Q}) and (\ref{kama})
that
\begin{align*}
\psi^{\mathrm{T}}Q^{-1}\psi &  =\psi^{\mathrm{T}}\left(  \gamma^{2n-1}%
L_{n}^{-1}Q_{n}L_{n}^{-1}\right)  ^{-1}\psi \\
&  \leq \frac{\mathit{\Lambda}}{\gamma^{2n-1}}(  L_{n}\psi)
^{\mathrm{T}}\left(  L_{n}\psi \right) \\
&  \leq \frac{d^{2}\mathit{\Lambda}}{\gamma^{2n-3}}(  L_{n}z)
^{\mathrm{T}}(  L_{n}z)  .
\end{align*}
On the other hand, in light of $z=\xi+e$, (\ref{P}), (\ref{Q}) and
(\ref{kama}), we can get%
\begin{align*}
\left(  L_{n}z\right)  ^{\mathrm{T}}\left(  L_{n}z\right)   &  =\left(
L_{n}\left(  \xi+e\right)  \right)  ^{\mathrm{T}}\left(  L_{n}\left(
\xi+e\right)  \right) \\
&  \leq2\left(  L_{n}\xi \right)  ^{\mathrm{T}}\left(  L_{n}\xi \right)
+2\left(  L_{n}e\right)  ^{\mathrm{T}}\left(  L_{n}e\right) \\
&  \leq2\mathit{\Lambda}\left(  \frac{\xi^{\mathrm{T}}P\xi}{\gamma}%
+\gamma^{2n-1}e^{\mathrm{T}}Q^{-1}e\right)  .
\end{align*}
Therefore, we have
\begin{align*}
\psi^{\mathrm{T}}Q^{-1}\psi &  \leq \frac{2d^{2}\mathit{\Lambda}^{2}}%
{\gamma^{2n-3}}\left(  \frac{\xi^{\mathrm{T}}P\xi}{\gamma}+\gamma
^{2n-1}e^{\mathrm{T}}Q^{-1}e\right) \\
&  =\frac{2d^{2}\mathit{\Lambda}^{2}}{\gamma^{2n-2}}\xi^{\mathrm{T}}%
P\xi+2d^{2}\mathit{\Lambda}^{2}\gamma^{2}e^{\mathrm{T}}Q^{-1}e.
\end{align*}
In view of $L_{n}A=AL_{n}\gamma$, (\ref{P}) and (\ref{kama}), it yields that
\begin{align*}
A^{\mathrm{T}}Q^{-1}A  &  =A^{\mathrm{T}}\frac{1}{\gamma^{2n-1}}L_{n}%
Q_{n}^{^{-1}}L_{n}A\\
&  =\frac{1}{\gamma^{2n-3}}L_{n}A^{\mathrm{T}}Q_{n}^{^{-1}}AL_{n}\\
&  \leq \lambda_{\max}\left(  A^{\mathrm{T}}Q_{n}^{^{-1}}A\right)  \frac
{1}{\gamma^{2n-3}}L_{n}L_{n}\\
&  =\lambda_{\max}\left(  A^{\mathrm{T}}Q_{n}^{^{-1}}A\right)  \mathit{\Lambda
}\frac{1}{\gamma^{2n-3}}L_{n}\mathit{\Lambda}^{-1}L_{n}\\
&  \leq \lambda_{\max}\left(  A^{\mathrm{T}}Q_{n}^{^{-1}}A\right)
\mathit{\Lambda}\frac{1}{\gamma^{2n-3}}L_{n}Q_{n}^{^{-1}}L_{n}\\
&  =\lambda_{\max}\left(  A^{\mathrm{T}}Q_{n}^{^{-1}}A\right)  \mathit{\Lambda
}\gamma^{2}Q^{-1}
\triangleq k_{8}\gamma^{2}Q^{-1}.
\end{align*}
Again in view of $z=\xi+e$, (\ref{P}), and (\ref{kama}), it follows that
\begin{align*}
z^{\mathrm{T}}Q^{-1}z  &  =\left(  \xi+e\right)  ^{\mathrm{T}}Q^{-1}\left(
\xi+e\right) \\
&  \leq2\xi^{\mathrm{T}}Q^{-1}\xi+2e^{\mathrm{T}}Q^{-1}e\\
&  \leq \frac{2}{\gamma^{2n-1}}\xi^{\mathrm{T}}L_{n}Q_{n}^{^{-1}}L_{n}%
\xi+2e^{\mathrm{T}}Q^{-1}e\\
&  \leq \frac{2\mathit{\Lambda}}{\gamma^{2n-1}}\xi^{\mathrm{T}}L_{n}L_{n}%
\xi+2e^{\mathrm{T}}Q^{-1}e\\
&  \leq \frac{2\mathit{\Lambda}^{2}}{\gamma^{2n-1}}\xi^{\mathrm{T}}L_{n}%
P_{n}L_{n}\xi+2e^{\mathrm{T}}Q^{-1}e\\
&  =\frac{2\mathit{\Lambda}^{2}}{\gamma^{2n}}\xi^{\mathrm{T}}P\xi
+2e^{\mathrm{T}}Q^{-1}e.
\end{align*}
Then we have%
\begin{align*}
z^{\mathrm{T}}A^{\mathrm{T}}Q^{-1}Az  &  \leq k_{8}\gamma^{2}z^{\mathrm{T}%
}Q^{-1}z\\
&  \leq \frac{2\mathit{\Lambda}^{2}k_{8}}{\gamma^{2n-2}}\xi^{\mathrm{T}}%
P\xi+2k_{8}\gamma^{2}e^{\mathrm{T}}Q^{-1}e.
\end{align*}
With the above results and (\ref{ple2}), (\ref{V_e_dot}) can be written as%
\begin{align}
\dot{V}_{e}\left(  t,e\right)  \leq &  \left(  2n+1-\frac{1}{n}\right)
\gamma^{2n}\dot{\gamma}e^{\mathrm{T}}Q^{-1}e\nonumber \\
&  -\gamma^{2n+2}\beta e^{\mathrm{T}}Q^{-1}e+\left \vert \theta \right \vert
\gamma^{2n}k_{4}z^{\mathrm{T}}A^{\mathrm{T}}Q^{-1}Az\nonumber \\
&  +\left \vert \theta \right \vert \frac{1}{k_{4}}\gamma^{2n+2}e^{\mathrm{T}%
}Q^{-1}e+\gamma^{2n}k_{5}\psi^{\mathrm{T}}Q^{-1}\psi \nonumber \\
&  +\frac{1}{k_{5}}\gamma^{2n+2}e^{\mathrm{T}}Q^{-1}e\nonumber \\
\leq &  \left(  \left(  2n+1-\frac{1}{n}\right)  \frac{\dot{\gamma}}{\gamma
}-\beta \gamma \right)  \gamma^{2n+1}e^{\mathrm{T}}Q^{-1}e\nonumber \\
&  +\left(  2k_{5}d^{2}\mathit{\Lambda}^{2}+\frac{1}{k_{5}}\right)
\gamma^{2n+2}e^{\mathrm{T}}Q^{-1}e\nonumber \\
&  +2\left \vert \theta \right \vert \mathit{\Lambda}^{2}k_{4}k_{8}\gamma^{2}%
\xi^{\mathrm{T}}P\xi \nonumber \\
&  +\left \vert \theta \right \vert \left(  2k_{4}k_{8}+\frac{1}{k_{4}}\right)
\gamma^{2n+2}e^{\mathrm{T}}Q^{-1}e\nonumber \\
&  +2k_{5}d^{2}\mathit{\Lambda}^{2}\gamma^{2}\xi^{\mathrm{T}}P\xi. \label{V_e}%
\end{align}
Take the total Lyapunov-like function%
\[
\mathcal{V}=\mathcal{V}\left(  t,\xi,e\right)  =V_{\xi}\left(  t,\xi \right)
+k_{7}V_{e}\left(  t,e\right)  ,
\]
where $k_{7}$ is a positive constant to be designed later. It can be calculated from
(\ref{V_xi}) and (\ref{V_e}) that%
\begin{align*}
\mathcal{\dot{V}}=  &  \dot{V}_{\xi}\left(  t,\xi \right)  +k_{7}\dot{V}%
_{e}\left(  t,e\right) \\
\leq &  \left(  \dot{\gamma}+\frac{\delta_{\mathrm{c}}\dot{\gamma}}{n}%
-\beta \gamma^{2}+k_{2}\beta \gamma^{2}\right)  \xi^{\mathrm{T}}P\xi \\
&  +\frac{\beta}{k_{2}}\gamma^{2n+2}nk_{3}e^{\mathrm{T}}Q^{-1}e\\
&  +k_{7}\left(  \left(  2n+1-\frac{1}{n}\right)  \dot{\gamma}-\beta \gamma
^{2}\right)  \gamma^{2n}e^{\mathrm{T}}Q^{-1}e\\
&  +k_{7}\left(  2d^{2}\mathit{\Lambda}^{2}k_{5}+\frac{1}{k_{5}}\right)
\gamma^{2n+2}e^{\mathrm{T}}Q^{-1}e\\
&  +2\left \vert \theta \right \vert \mathit{\Lambda}^{2}k_{4}k_{7}k_{8}%
\gamma^{2}\xi^{\mathrm{T}}P\xi \\
&  +\left \vert \theta \right \vert \left(  2k_{4}k_{8}+\frac{1}{k_{4}}\right)
k_{7}\gamma^{2n+2}e^{\mathrm{T}}Q^{-1}e\\
&  +2d^{2}\mathit{\Lambda}^{2}k_{5}k_{7}\gamma^{2}\xi^{\mathrm{T}}P\xi \\
=  &  \mu_{\xi}\xi^{\mathrm{T}}P\xi+\mu_{e}k_{7}\gamma^{2n}e^{\mathrm{T}%
}Q^{-1}e+2\left \vert \theta \right \vert \mathit{\Lambda}^{2}k_{7}k_{4}%
k_{8}\gamma^{2}\xi^{\mathrm{T}}P\xi \\
&  +\frac{\left \vert \theta \right \vert }{k_{4}}k_{7}\gamma^{2n+2}%
e^{\mathrm{T}}Q^{-1}e+2k_{4}k_{8}\left \vert \theta \right \vert k_{7}%
\gamma^{2n+2}e^{\mathrm{T}}Q^{-1}e,
\end{align*}
where%
\begin{align*}
\mu_{\xi}=  &  \dot{\gamma}+\frac{\delta_{\mathrm{c}}\dot{\gamma}}{n}%
-\beta \gamma^{2}+k_{2}\beta \gamma^{2}+2d^{2}\mathit{\Lambda}^{2}k_{5}%
k_{7}\gamma^{2}\\
\mu_{e}=  &  \left(  2n+1-\frac{1}{n}\right)  \dot{\gamma}-\beta \gamma
^{2}+\frac{1}{k_{5}}\gamma^{2}\\
&  +2d^{2}\mathit{\Lambda}^{2}k_{5}\gamma^{2}+\frac{\beta nk_{3}}{k_{2}k_{7}%
}\gamma^{2}.
\end{align*}
Take the parameters as $k_{2}=1/2, k_{5}=1/(\sqrt{2}d\mathit{\Lambda})$ and
$k_{7}=4nk_{3}$. Then it follows from (\ref{beta}) that%
\begin{align*}
\mu_{\xi}=  &  \left(  1+\frac{\delta_{\mathrm{c}}}{n}\right)  \dot{\gamma
}-\left(  1-k_{2}\right)  \beta \gamma^{2}+2d^{2}\mathit{\Lambda}^{2}k_{5}%
k_{7}\gamma^{2}\\
\leq &  \left(  1+\frac{\delta_{\mathrm{c}}}{n}\right)  \dot{\gamma}-\frac
{1}{2}\beta_{1}\gamma^{2}+4\sqrt{2}d\mathit{\Lambda}cQ_{n}P_{n}Q_{n}%
c^{\mathrm{T}}\gamma^{2}\\
=  &  \left(  1+\frac{\delta_{\mathrm{c}}}{n}\right)  \dot{\gamma}-\left(
1+\frac{\delta_{\mathrm{c}}}{n}\right)  \gamma^{2}-\gamma^{2} = -\gamma^{2},\\
\mu_{e}=  &  \left(  2n+1-\frac{1}{n}\right)  \dot{\gamma}-\left(
1-\frac{nk_{3}}{k_{2}k_{7}}\right)  \beta \gamma^{2}+\frac{1}{k_{5}}\gamma
^{2}\\
&  +2d^{2}\mathit{\Lambda}^{2}k_{5}\gamma^{2}\\
\leq &  \left(  2n+1-\frac{1}{n}\right)  \dot{\gamma}-\frac{1}{2}\beta
_{2}\gamma^{2}+2\sqrt{2}d\mathit{\Lambda}\gamma^{2}\\
=  &  \left(  2n+1-\frac{1}{n}\right)  \dot{\gamma}-\left(  2n+1-\frac{1}%
{n}\right)  \gamma^{2}-\gamma^{2}
=   -\gamma^{2}.
\end{align*}
Therefore, we have%
\begin{align*}
\mathcal{\dot{V}}\left(  t,\xi,e\right)  \leq &  -\gamma \mathcal{V}\left(
t,\xi,e\right)  +2\left \vert \theta \right \vert \mathit{\Lambda}^{2}k_{7}%
k_{4}k_{8}\gamma^{2}\xi^{\mathrm{T}}P\xi \\
&  +\left \vert \theta \right \vert \frac{1}{k_{4}}k_{7}\gamma^{2n+2}%
e^{\mathrm{T}}Q^{-1}e\\
&  +2\left \vert \theta \right \vert k_{4}k_{8}k_{7}\gamma^{2n+2}e^{\mathrm{T}%
}Q^{-1}e\\
=  &  -\gamma \mathcal{V}\left(  t,\xi,e\right)  +\frac{6\left \vert
x_{0}(0)\right \vert }{T^{2}}\mathit{\Lambda}^{2}k_{7}k_{4}k_{8}\gamma
\xi^{\mathrm{T}}P\xi \\
&  +3\beta \mathit{\Lambda}^{2}k_{7}k_{4}k_{8}\gamma \xi^{\mathrm{T}}P\xi \\
&  +3\left(  \frac{\left \vert x_{0}(0)\right \vert }{T^{2}}+\frac{\beta}%
{2}\right)  \frac{1}{k_{4}}k_{7}\gamma^{2n+1}e^{\mathrm{T}}Q^{-1}e\\
&  +3\left(  \frac{2\left \vert x_{0}(0)\right \vert }{T^{2}}+\beta \right)
k_{4}k_{8}k_{7}\gamma^{2n+1}e^{\mathrm{T}}Q^{-1}e\\
\leq &  -\gamma \mathcal{V}\left(  t,\xi,e\right)  +\lambda \mathcal{V}\left(
t,\xi,e\right)  ,
\end{align*}
where
\begin{align*}
\lambda=  &  3\left(  \frac{2\left \vert x_{0}(0)\right \vert }{T^{2}}%
+\beta \right)  \mathit{\Lambda}^{2}k_{7}k_{4}k_{8}+3\left(  \frac{\left \vert
x_{0}(0)\right \vert }{T^{2}}+\frac{\beta}{2}\right)  \frac{1}{k_{4}}\\
&  +3\left(  \frac{2\left \vert x_{0}(0)\right \vert }{T^{2}}+\beta \right)
k_{4}k_{8},
\end{align*}
which, together with the comparison lemma in \cite{Khalil}, indicates
\begin{equation}
\mathcal{V}(t,\xi(t),e(t))\leq(T-t)\mathrm{e}^{\lambda t}\mathcal{V}(0,\xi
(0),e(0)),\quad t\in \lbrack0,T). \label{v_xi_e}%
\end{equation}
It can be obtained from (\ref{P}) and (\ref{Q}) that
\begin{align*}
V_{\xi}(  t,\xi )   &  =\gamma \xi^{\mathrm{T}}P(
\gamma)  \xi
\geq \gamma^{2}\xi^{\mathrm{T}}L_{n}(  \gamma_{0})  P_{n}%
L_{n}(  \gamma_{0})  \xi \\
&  \geq \gamma^{2}\lambda_{\min}(  L_{n}(  \gamma_{0})
P_{n}L_{n}(  \gamma_{0}) )  \Vert \xi  \Vert
^{2},\\
V_{e}(  t,e)   &  =\gamma^{2n+1}e^{\mathrm{T}}Q^{-1}(
\gamma)  e
\geq \gamma^{2}e^{\mathrm{T}}L_{n}\left(  \gamma_{0}\right)  Q_{n}^{^{-1}%
}L_{n}\left(  \gamma_{0}\right)  e\\
&  \geq \gamma^{2}\lambda_{\min}\left(  L_{n}\left(  \gamma_{0}\right)
Q_{n}^{^{-1}}L_{n}\left(  \gamma_{0}\right)  \right)  \left \Vert e\right \Vert
^{2},
\end{align*}
which indicates that%
\[
\mathcal{V}\left(  t,\xi,e\right)  =V_{\xi}\left(  t,\xi \right)  +k_{7}%
V_{e}\left(  t,e\right)  \geq \gamma^{2}\omega_{1}\left(  \gamma_{0}\right)
\left \Vert \chi \right \Vert ^{2},
\]
where $\chi=[\xi^{\mathrm{T}},e^{\mathrm{T}}]^{\mathrm{T}}$ and $\omega
_{1}(\gamma_{0})=\min \{ \lambda_{\min}(L_{n}(\gamma_{0})P_{n}$ $L_{n}%
(\gamma_{0})),\lambda_{\min}(L_{n}(\gamma_{0})Q_{n}^{^{-1}}L_{n}(\gamma
_{0}))\}$. Therefore, it yields that%
\begin{align}
\Vert \chi (  t) \Vert ^{2}  &  \leq \frac{\mathcal{V}%
(  t,\xi(t),e(t))  }{\gamma^{2}(  t)  \omega_{1}(
\gamma_{0})  }
\leq \frac{(  T-t)  ^{3}\mathrm{e}^{\lambda t}\mathcal{V}(
0,\xi(0),e(0))  }{\omega_{1}(  \gamma_{0})  }\nonumber \\
&  \leq \frac{(  T-t)  ^{3}\omega_{2}(  \gamma_{0})
}{T^{2}\omega_{1}(  \gamma_{0})  }\mathrm{e}^{\lambda t}\Vert
\chi(  0) \Vert ^{2}, \label{kexi}%
\end{align}
where $\omega_{2}(\gamma_{0})=\max \{ \lambda_{\max}(L_{n}(\gamma_{0}%
)P_{n}L_{n}(\gamma_{0})),\lambda_{\max}(L_{n}$ $(\gamma_{0})Q_{n}^{^{-1}}%
L_{n}(\gamma_{0}))\}$. Thus (\ref{F}) is proven. Moreover, according to
(\ref{z_i_x_i}) and $e=z-\xi$, it follows that%
\begin{align*}
\left \Vert x(t)  \right \Vert  &  \leq \left \Vert L_{n}(
T-t)  \right \Vert \left \Vert z(t)  \right \Vert \\
&  \leq \left \Vert L_{n}(  T-t)  \right \Vert \left(  \left \Vert
e(  0)  \right \Vert +\left \Vert \xi (0)  \right \Vert
\right) \\
&  \leq2\left \Vert L_{n}(T)  \right \Vert \left \Vert \chi \left(
t\right)  \right \Vert \\
&  \leq2\left \Vert L_{n}(  T)  \right \Vert \left(  T-t\right)
^{\frac{3}{2}}\frac{\sqrt{\omega_{2}\left(  \gamma_{0}\right)  }}%
{T\sqrt{\omega_{1}\left(  \gamma_{0}\right)  }}\mathrm{e}^{\frac{\lambda t}%
{2}}\left \Vert \chi \left(  0\right)  \right \Vert \\
&  \leq2\left \Vert L_{n}\left(  T\right)  \right \Vert \left(  T-t\right)
^{\frac{3}{2}}\frac{\sqrt{\omega_{2}\left(  \gamma_{0}\right)  }}%
{T\sqrt{\omega_{1}\left(  \gamma_{0}\right)  }}\\
&  \times \mathrm{e}^{\frac{\lambda t}{2}}\left(  \left \Vert L_{n}\left(
\frac{1}{T}\right)  \right \Vert \left \Vert x\left(  0\right)  \right \Vert
+2\left \Vert \xi \left(  0\right)  \right \Vert \right)  ,
\end{align*}
which proves (\ref{E}).

Finally, by using (\ref{bb}), (\ref{v_xi_e}) and (\ref{kexi}), we can obtain
that
\begin{align*}
\left \Vert u(  t)  \right \Vert ^{2}  &  =\beta^{2}\xi^{\mathrm{T}%
}P(  \gamma )  bb^{\mathrm{T}}P(  \gamma)  \xi \\
&  \leq \beta^{2}b^{\mathrm{T}}P(  \gamma )  b\xi^{\mathrm{T}%
}P(  \gamma )  \xi \\
&  =\beta^{2}n\gamma \xi^{\mathrm{T}}P(  \gamma)  \xi
  =\beta^{2}nV_{\xi}(  t,\xi) \\
&  \leq \beta^{2}n\mathcal{V}(  t,\xi,e) \\
&  \leq \beta^{2}n(  T-t)  \mathrm{e}^{\lambda t}\mathcal{V}(
0,\xi(0),e(0)) \\
&  \leq \beta^{2}n\gamma_{0}^{2}\omega_{2}(  \gamma_{0}) (
T-t)  \mathrm{e}^{\lambda t}\left \Vert \chi (  0)  \right \Vert
^{2}\\
&  \leq \beta^{2}n\gamma_{0}^{2}\omega_{2}(  \gamma_{0}) (
T-t)  \mathrm{e}^{\lambda t}\\
&  \times \left(  \left \Vert L_{n}\left(  \frac{1}{T}\right)  \right \Vert
\left \Vert x(  0)  \right \Vert +2\left \Vert \xi(  0)
\right \Vert \right)  ,
\end{align*}
which prove (\ref{G}). The proof is finished.
\end{proof}

For nonholonomic systems, compared to time-invariant discontinuous feedback,
such as \cite{Astolfi96scl,Ge03auto,Hongtac05,Han18tac} and their citations,
there are few related results on smooth time-varying feedback. To the best of
the authors' knowledge, there are no smooth time-varying feedback results for
prescribed-time control of nonholonomic systems available in the literature.

In this paper, on the one hand, different from the traditional discontinuous
control methods \cite{Astolfi96scl,Ge03auto,Hongtac05,Hou21ast,LQtac02}, the
proposed methods are smooth, time-varying and even linear. On the other hand,
different from smooth time-varying methods
\cite{Murry93tac,Teel95ijc,Tian00ieee,Tian02auto}, where only the state
feedback controllers were considered and drive the state to converge to zero
exponentially, the proposed controllers are not only based on state feedback
but also on output feedback, and can drive the state to converge to zero
globally at any prescribed time. In addition, different from
\cite{Tian00ieee,Tian02auto}, the parameters in our controllers do not depend
on the initial state of the $x_{0}$-subsystem.

\section{\label{sec4}A Numerical Example}

In this part, we consider the uncertain bilinear model
\cite{Ge03auto,Wang18auto}
\begin{equation}
\dot{x}_{0}=\left(  1-\frac{\varepsilon^{2}}{2}\right)  v,\quad \dot{z}_{1}%
=z_{2}v,\quad \dot{z}_{2}=u+\varphi \left(  z_{1}\right)  ,\label{sys_ex}%
\end{equation}
where $\varepsilon$ is the bias in orientation and is assumed to be much
smaller than one, $v$ and $u$ are the two controls, and $\varphi \left(
z_{1}\right)  =z\left(  1+\theta_{1}^{2}\right)  $ with $\theta_{1}=\theta
_{1}(t)$ being an uncertain function.

Denote the output $y=[x_{0},z_{1}]^{\mathrm{T}}$ and
\begin{align*}
u_{0}  &  =\left(  1-\frac{\varepsilon^{2}}{2}\right)  v,\quad x_{1}%
=z_{1},\quad x_{2}=\frac{2}{2-\varepsilon^{2}}z_{2},\\
u_{1}  &  =\frac{2}{2-\varepsilon^{2}}u,\quad \phi (  x_{1})
=\frac{2}{2-\varepsilon^{2}}\varphi (  x_{1})  ,
\end{align*}
the system (\ref{sys_ex}) can be expressed as
\[
\dot{x}_{0}=u_{0},\quad \dot{x}_{1}=x_{2}u_{0},\quad \dot{x}_{2}=u_{1}+\phi (
x_{1})  ,\quad y=[x_{0},x_{1}]^{\mathrm{T}}.
\]

For this system, the existing results are all based on the $\sigma$ process (see
\cite{Astolfi96scl}), that is, $x_{0}(t)$ appears on the denominator (see
\cite{Ge03auto,Wang18auto}). When $x_{0}(t)$ is close to zero, this may cause
singular problems. For the situation when $x_{0}(0)=0$, most of the existing
methods rely on using switching controls (see
\cite{Ge03auto,Hongtac05,Wang18auto}), and the control strategy proposed in this paper
is workable for all initial states of $x_{0}(t)$, thus having greater potential for application.


In the simulation, we take $\varepsilon=0.1$, $[x_{0}(0),z_{1}(0),z_{2}%
(0)]=[0,-1,1]$, $[\xi_{1}(0),\xi_{2}(0)]=[0,0]$, $\theta_{1}=\sin(t)$,
$T=2.5$s and $\beta=100$. The states and controllers are plotted in Fig.
\ref{figure1}, from which it can be observed that the proposed method can
indeed drive the states and controllers to converge to zero at the prescribed
time $T=2.5$s.

\begin{figure}[t]
\centering
\includegraphics[scale=0.55]{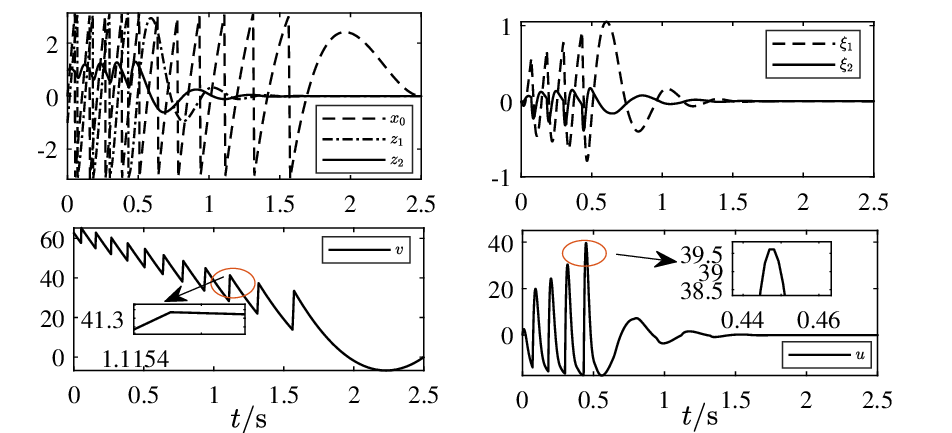}\caption{The state responses and control
signals for system (\ref{sys_ex})}%
\label{figure1}%
\end{figure}

\section{\label{sec5}Conclusion}

The smooth prescribed-time control problem for uncertain chained nonholonomic
systems was solved in this paper. With a novel smooth time-varying state
transformation, uncertain chained nonholonomic systems were reformulated as
uncertain linear time-varying systems. By fully utilizing the properties of a
class of parametric Lyapunov equations and constructing time-varying
Lyapunov-like functions, smooth time-varying high-gain controllers were
constructed. It was proved that the states and controllers can converge to zero at
the prescribed time. Both state feedback and observer-based output feedback
were considered. The effectiveness of the proposed methods was verified by a
numerical example.

\end{spacing}
\end{document}